\newcommand\genfd{{\bm k}}
\newcommand\op{\mathrm{op}}
\newcommand\id{\mathrm{id}}
\newcommand\Ug{U(\mathfrak{g})}
\newcommand\OO{\mathcal{O}}
\newcommand\Aut{\operatorname{Aut}}
\newcommand\ggf{\mathfrak{g}}
\newcommand\ad{\operatorname{ad}}
\newcommand\GG{\mathcal{G}}
\newcommand\Hom{\operatorname{Hom}}
\newcommand\Vect{\operatorname{Vec}_\genfd}
\newcommand\btl{\blacktriangleleft}
\newtheoremstyle{definition}{}{}{\upshape}{}{\bfseries}{.}{0.5em}{}
\newtheorem{theorem}{Theorem}[section]
\newtheorem{lemma}[theorem]{Lemma}
\newtheorem{proposition}[theorem]{Proposition}
\theoremstyle{definition}
\newtheorem{remark}[theorem]{Remark}
\begin{document}

\title{Enveloping algebra is a Yetter--Drinfeld module algebra over Hopf algebra of regular functions on the automorphism group of a Lie algebra}
\runningtitle{Enveloping algebra is a Yetter--Drinfeld module algebra}
\author*[1]{Zoran \v{S}koda}
\author[2]{Martina Stoji\'c}
\runningauthor{Z. \v{S}koda, M. Stoji\'c}
\affil[1]{\protect\raggedright Department of Teachers’ Education, University of Zadar, Franje Tudjmana 24, 23000 Zadar, Croatia, e-mail: zskoda@unizd.hr}
\affil[2]{\protect\raggedright Department of Mathematics, University of Zagreb, Bijeni\v{c}ka cesta~30, 10000 Zagreb, Croatia, e-mail: stojic@math.hr}
      %  \date{}

\abstract{
We present an elementary construction of a (highly degenerate) Hopf pairing
between the universal enveloping algebra $U(\ggf)$ of a finite-dimensional Lie algebra $\ggf$ over arbitrary field $\genfd$ and the Hopf algebra $\OO(\Aut(\ggf))$ of regular functions on the automorphism group of $\ggf$. This pairing induces a Hopf action of $\OO(\Aut(\ggf))$ on $U(\ggf)$ which together with an explicitly given coaction makes $U(\ggf)$ into a braided commutative Yetter--Drinfeld $\OO(\Aut(\ggf))$-module algebra. From these data one constructs a Hopf algebroid structure on the smash product algebra $\OO(\Aut(\ggf))\sharp U(\ggf)$ retaining essential features from earlier constructions of a Hopf algebroid structure on infinite-dimensional versions of Heisenberg double of $U(\ggf)$, including a noncommutative phase space of Lie algebra type, while avoiding the need of completed tensor products.

We prove a slightly more general result where algebra $\OO(\Aut(\ggf))$ is replaced by $\OO(\Aut(\mathfrak{h}))$ and where $\mathfrak{h}$ is any finite-dimensional Leibniz algebra having $\ggf$ as its maximal Lie algebra quotient.
\\
\\
\textbf{MSC 2020:} 16T05, 16S40
}

\keywords{Yetter--Drinfeld module algebra, universal enveloping algebra, Lie algebra automorphism, Leibniz algebra, adjoint map}

\maketitle
 
\section{Introduction}

Yetter--Drinfeld modules
over bialgebras~(\ref{ss:YD} and~\cite{radford,radfordtowber,SS:two})
are ubiquitous in quantum algebra, low dimensional
topology and representation theory. More intricate structure of
a braided commutative monoid in the category ${}^H\mathcal{YD}_H$
of Yetter--Drinfeld modules is an ingredient in a construction of 
scalar extension bialgebroids and Hopf algebroids.
Namely, given a Hopf algebra $H$ and a braided commutative 
Yetter--Drinfeld $H$-module algebra $A$, the smash
product algebra $H\sharp A$ has a structure of
a Hopf algebroid over $A$~\cite{bohmHbk,BrzMilitaru,stojic}.
An important special case 
is the Heisenberg double $A^*\sharp A$ of a finite-dimensional
Hopf algebra $A$, where $A^*$ is the dual Hopf algebra of~$A$; the $A^*$-coaction
on $A$ is given by an explicit formula involving basis of $A$ and the dual
basis of $A^*$ (\cite{Lu}, map $\beta$ in Section 6). 
There are several important examples in literature,
some motivated by mathematical physics, when the underlying
algebra of $A$ is the universal enveloping algebra $U(\ggf)$ of
an $n$-dimensional Lie algebra $\ggf$. In the example of a noncommutative phase space $\mathcal{H}_\ggf$ of Lie algebra type~\cite{halg}, the Hopf algebra $H$ is a realization via formal power series of the algebraic dual $U(\ggf)^*$ with its natural topological Hopf algebra structure. Historically, $\mathcal{H}_\ggf$ as an algebra has been introduced by extending $U(\ggf)$ by deformed derivatives (``momenta'') in~\cite{leib} and in a rather nonrigorous treatment~\cite{heisd} it has been argued that $\mathcal{H}_\ggf$ is actually the Heisenberg double of $U(\ggf)$; this made plausible that Hopf algebroid structure on $\mathcal{H}_\ggf$ could be exhibited analogously to Lu's example of finite-dimensional double, leading to an ad hoc version of completed Hopf algebroid structure in~\cite{halg}. An abstract version of $U(\ggf)^*\sharp U(\ggf)$ is described in~\cite{stojicphd} as an internal Hopf algebroid in the symmetric monoidal category of filtered cofiltered vector spaces. These examples may be viewed as
infinite-dimensional cases of Heisenberg double (in fact, this observation from~\cite{heisd} influenced the Hopf algebroid approach in~\cite{halg}), but a number of results in~\cite{stojicphd} show that there are intricate conditions for which infinite-dimensional dually paired Hopf algebras $H$ and $A$ one can indeed form a Hopf algebroid structure on the smash product algebra $H\sharp A$, even in a completed sense or even when one of the Hopf algebras is the restricted dual of another.
In the case of $U(\ggf)$, formulas from~\cite{halg} show
that there are special elements $\mathcal{O}^i_j\in H=U(\ggf)^*$ for $i,j\in\{1,\ldots,n\}$ such that the coaction on the generators is given by the formulas
(adapted to our conventions) $U(\ggf)\supset\ggf\ni x_k\mapsto \sum_i \mathcal{O}^i_k\otimes x_i\in U(\ggf)^*\otimes\ggf$. 
Moreover, elements~$\mathcal{O}^i_j$ satisfy
the relations which are satisfied by matrix elements of automorphisms of $\ggf$.
The prime motivation of this article is to demystify this phenomenon
and to find a much smaller Hopf algebra $H$ containing an abstract
model for $\mathcal{O}^i_j$ and avoiding any completions in describing
Yetter--Drinfeld $H$-module structure on~$U(\ggf)$.

We show that the Hopf algebra of regular functions $H = \mathcal{O}(\Aut(\ggf))$ on the automorphism group of~$\ggf$ will do, namely that $U(\ggf)$ is a braided commutative Yetter--Drinfeld module $\mathcal{O}(\Aut(\ggf))$-algebra whose structure is given by essentially the same formulas as in the case of $U(\ggf)^*$
from~\cite{stojicphd}. General formulas for scalar extensions from~\cite{stojic} describe the Hopf algebroid structure on the smash product $\mathcal{O}(\Aut(\ggf))\sharp U(\ggf)$. 
We write formulas for the symmetric Hopf algebroid structure in full detail. In the work~\cite{omin} we present several other natural examples of Hopf algebras $H$ equipped with a Hopf algebra homomorphism $\mathcal{O}(\Aut(\ggf))\to H$
and where $U(\ggf)$ is still a Yetter--Drinfeld module algebra over $H$ without completions.

The main result of this article is actually
proved in a slightly more general form than
described above.
Namely, instead of the automorphism group of a Lie algebra $\ggf$ we can take
the automorphism group of any
finite-dimensional Leibniz algebra $\mathfrak{h}$
such that $\ggf$ is the maximal Lie algebra quotient
$\mathfrak{h}_{{Lie}}$ of $\mathfrak{h}$, enabling a structure of a Hopf algebroid on the smash product $\OO(\Aut(\mathfrak h))\sharp U(\mathfrak{h}_{Lie})$.

\section{Preliminaries}

\subsection{General conventions and preliminaries on pairings}\label{ss:pairing}
Throughout the paper we freely use Sweedler notation with or without the summation sign~\cite{majid,radford} and the Kronecker symbol $\delta^i_j$. Throughout, $\genfd$ is a fixed ground field and $\Vect$ the category of $\genfd$-vector spaces. If $V\in\Vect$, $V^*:=\Hom_\genfd(V,\genfd)$. If $\Delta$ is a comultiplication on a $\genfd$-coalgebra $C$ then for $m\geq 1$, $\Delta^m := (\id_{C^{\otimes m-2}}\otimes\Delta)\circ\ldots\circ(\id_C\otimes\Delta)\circ\Delta\colon C\to C^{\otimes m+1}$.
If $(A,\mu,\eta)$ is an associative $\genfd$-algebra with multiplication $\mu\colon A\otimes A\to A$ and unit map $\eta$, consider its transpose $\mu^*\colon A^*\to (A\otimes A)^*$. \emph{Restricted dual} $A^\circ\subset A^*$ consists of all $f\in A^*$ such that $\mu^*(f)$ falls within the image of inclusion $A^*\otimes A^*\hookrightarrow (A\otimes A)^*$. It follows that $\mu^*(f)$ belongs also to the image of $A^\circ\otimes A^\circ$ hence the restriction $\mu^*(f)|_{A^\circ}$ may be corestricted to a map $\Delta_{A^\circ}\colon A^\circ\to A^\circ\otimes A^\circ$ making $A^\circ$ into a coalgebra; if $A$ is a bialgebra  (resp.\ Hopf algebra) then $A^\circ$ is. 
\emph{Pairings} of vector spaces are bilinear maps into the ground field which are in this work not required to be nondegenerate. For $V,W\in\Vect$, a pairing $\langle -,-\rangle\colon V\otimes W\to\genfd$ induces a pairing between $V\otimes V$ and $W\otimes W$ componentwise: $\langle v\otimes v',w\otimes w'\rangle :=
\langle v,w\rangle \cdot_\genfd \langle v',w'\rangle$. If $V\in\Vect$ and $A$ is an algebra, and $f\in A^*$ such that there exists an element $f'\in A^*\otimes A^*$ such that $\langle f',g\otimes h\rangle = \langle f, g\cdot h\rangle$ for all $g,h\in A$, then the functional $\langle f, -\rangle\in A^\circ$ and $\langle f',-\rangle\in A^\circ\otimes A^\circ$. If $V = C$ is a coalgebra then $\langle\Delta(f),-\rangle\in A^*\otimes A^*$. Thus, if $\langle\Delta(c),h\otimes g\rangle = \langle c, g\cdot h\rangle$ for all $g,h\in A$, then $\phi_1\colon c\mapsto \langle c,-\rangle :=\phi_1(c)$ is a map $C\to A^\circ$, and $(\phi_1\otimes\phi_1)\circ\Delta_C = \Delta_{A^\circ}\circ\phi_1$. It is a coalgebra map if moreover $\langle c,1\rangle = \epsilon(c)$. Conversely, if $c\mapsto\langle c,-\rangle$ corestricts to a coalgebra map $\phi_1\colon C\to A^\circ$, then the identities $\langle\Delta(c),h\otimes g\rangle = \langle c, g\cdot h\rangle$ and $\langle c,1\rangle = \epsilon(c)$ hold for all $c\in C$, $g,h\in A$. Both conditions hold if and only if $a\mapsto\langle -,a\rangle$ is a map of algebras $A\to C^*$.
A pairing between two bialgebras $B$ and $H$ is Hopf if
$\langle \Delta_B(b), h\otimes k\rangle = \langle b, h\cdot k\rangle$, $\epsilon_B(b) = \langle b,1_H\rangle$ and the symmetric conditions $\langle b\otimes c, \Delta_H(h)\rangle = \langle b\cdot c,h\rangle$, $\epsilon_H(h) = \langle 1_B,h\rangle$. Clearly, the latter two conditions hold if and only if $h\mapsto\langle -,h\rangle$ corestricts to a coalgebra map $\phi_2\colon H\to B^\circ$. Alternatively, the pairing is Hopf if and only if $b\mapsto\langle b,-\rangle$ corestricts to a bialgebra map $\phi_1\colon B\to H^\circ$. If $B$ and $H$ are Hopf algebras, $\phi_1$ is a bialgebra map between Hopf algebras, hence it automatically respects the antipode. Thus, for every Hopf pairing between Hopf algebras, identity  $\langle S_B(b), h \rangle = \langle b, S_H(h)\rangle$ holds for all $b\in B$, $h\in H$.

Replacing Hopf pairing with a bialgebra map $\phi_1\colon B\to H^\circ$ is useful in constructing new pairings from old. Namely, if $I\subset B$ is a \emph{biideal} (ideal which is also a \emph{coideal} in the sense that $\Delta(I)\subset I\otimes B+ B\otimes I$ and $\epsilon(I) = 0$), then there is an induced bialgebra map $\phi_1\colon B/I\to H^\circ$ if and only if $\phi_1(I)= 0$ that is $\langle i,h\rangle = 0$ for all $i\in I, h\in H$.

\begin{lemma}\label{lem:coidealK} Let $\langle -,-\rangle\colon B\otimes H\to\genfd$ be a bialgebra pairing.
\begin{enumerate} 
\item[(i)] For the bialgebra pairing to vanish on $I\otimes H$ where $I\subset B$ is a coideal,
it suffices that it vanishes on $I\otimes K_H$ where $K_H$ is some set of algebra generators of $H$.

\item[(ii)] If $I\subset B$ is a biideal, $K_I\subset I$ a set of generators of $I$ as an ideal and $K_H$ a set of generators of $H$ as an algebra, then the bialgebra pairing vanishes on $I\otimes H$ if, in addition to $\langle i,h\rangle = 0$ for all $i\in K$ and $h\in K_H$, one has that $\Delta^2(h) \subset H\otimes \mathrm{Span}_\genfd(K_H\cup \{1\})\otimes H$ for all $h\in K_H$.
\end{enumerate}
\end{lemma}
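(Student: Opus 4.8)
The plan is to prove (i) first, by identifying the subspace of $H$ on which the pairing kills all of $I$ and showing it is a unital subalgebra, and then to reduce (ii) to (i).

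For (i), I would set $S := \{h\in H : \langle i,h\rangle = 0 \text{ for all } i\in I\}$ and show $S = H$; this is precisely the claimed vanishing on $I\otimes H$. Bilinearity makes $S$ a linear subspace, and $K_H\subseteq S$ is the hypothesis. The unit satisfies $1_H\in S$ since $\langle i,1_H\rangle = \epsilon_B(i) = 0$ for $i\in I$, using the pairing axiom $\epsilon_B = \langle-,1_H\rangle$ and the coideal condition $\epsilon_B(I)=0$. The crucial step is closure under multiplication: for $h,k\in S$ and $i\in I$, I would expand $\langle i,h\cdot k\rangle = \langle\Delta_B(i),h\otimes k\rangle$ and invoke the coideal inclusion $\Delta_B(i)\in I\otimes B + B\otimes I$. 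Writing $\Delta_B(i)$ accordingly, every summand pairs an element of $I$ against either $h\in S$ or $k\in S$ and hence vanishes, so $h\cdot k\in S$. Thus $S$ is a unital subalgebra containing $K_H$, forcing $S=H$.

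For (ii), by (i) it is enough to show the pairing already vanishes on $I\otimes K_H$. Since $K_I$ generates $I$ as a two-sided ideal and $B$ is unital, every element of $I$ is a linear combination of products $b\,\kappa\,c$ with $b,c\in B$, $\kappa\in K_I$, so by bilinearity I would fix one such product and one $h\in K_H$. Applying the pairing axiom twice (equivalently, pairing a triple product against the iterated coproduct) gives $\langle b\,\kappa\,c,h\rangle = \langle b\otimes\kappa\otimes c, \Delta^2_H(h)\rangle$. The hypothesis $\Delta^2_H(h)\in H\otimes\mathrm{Span}_\genfd(K_H\cup\{1\})\otimes H$ ensures every middle tensor factor lies in $\mathrm{Span}_\genfd(K_H\cup\{1\})$, and there $\langle\kappa,-\rangle$ vanishes: it is zero on $K_H$ by the first hypothesis and zero on $1_H$ because $\langle\kappa,1_H\rangle = \epsilon_B(\kappa)=0$. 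Hence each summand $\langle b,h_{(1)}\rangle\langle\kappa,h_{(2)}\rangle\langle c,h_{(3)}\rangle$ vanishes and $\langle b\,\kappa\,c,h\rangle=0$, so (ii) reduces to (i).

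I expect the only genuinely delicate point to be the bookkeeping in (ii): because $I$ is a \emph{two-sided} ideal, a general element of $I$ is a two-sided product $b\,\kappa\,c$, and isolating the generator $\kappa$ requires splitting $h$ into three tensor factors --- which is exactly what the $\Delta^2$ hypothesis controls, and why a single-coproduct condition would not suffice. The coideal property is used twice (for $1_H\in S$ in (i) and for $\langle\kappa,1_H\rangle=0$ in (ii)), and the full inclusion $\Delta_B(I)\subseteq I\otimes B+B\otimes I$ --- not merely $\epsilon_B(I)=0$ --- is needed precisely for the multiplicative closure of $S$ in (i).
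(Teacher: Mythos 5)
Your proposal is correct and follows essentially the same route as the paper: part (i) rests on the same computation (expand $\Delta_B(i)\in I\otimes B+B\otimes I$ against $h\otimes k$, your subalgebra $S$ being just a packaging of the paper's induction over products of generators), and part (ii) is exactly the paper's reduction via $\langle b\kappa c,h\rangle=\sum\langle b,h_{(1)}\rangle\langle\kappa,h_{(2)}\rangle\langle c,h_{(3)}\rangle$ with the middle leg controlled by the $\Delta^2$ hypothesis. Your explicit treatment of the unit ($1_H\in S$ and $\langle\kappa,1_H\rangle=\epsilon_B(\kappa)=0$) is a point the paper leaves implicit, but it is the same argument.
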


\begin{proof}
(i) Fix any $i\in I$. Then $\Delta(i) = \sum_\alpha i_\alpha\otimes b_\alpha+\sum_\beta b'_\beta\otimes i'_\beta$ for some $i_\alpha\in I,b_\beta\in B$. Then 
$\langle i, h h'\rangle = \sum_\alpha\langle i_\alpha,h\rangle\langle b_\alpha,h'\rangle + \sum_\beta\langle b'_\beta,h\rangle\langle i'_\beta,h'\rangle = 0$.

(ii) To show $\langle b i b', h\rangle = 0$ one needs $\langle b,h_{(1)}\rangle \langle i,h_{(2)}\rangle\langle b',h_{(3)}\rangle = 0$.  The condition on $\Delta^2(h)$ is sufficient for this to hold for all $i\in I$, $b,b'\in B$, $h\in K_H$.
\end{proof}
One often starts by constructing an auxiliary pairing where one of the bialgebras is free~\cite{radford}. If $C$ is a coalgebra, then by the universal property of the tensor algebra $T(C)$ there is a unique algebra map $T(C)\to T(C)\otimes T(C)$ extending the composition $C\stackrel{\Delta}\rightarrow C\otimes C\hookrightarrow T(C)\otimes T(C)$ along inclusion $C\hookrightarrow T(C)$; this is a comultiplication on $T(C)$ making it into a bialgebra. Every coalgebra map $C\to B$ to a bialgebra admits a unique extension to a bialgebra map $T(C)\to B$.

We need a different variant of this standard universal property. Suppose $C=V\otimes\genfd u$ as a vector space where $u$ is \emph{grouplike}, that is, $\Delta(u)=u\otimes u$,  $\epsilon(u)=1$. Ideal $I_u$ in $T(C)$ generated by $u-1$ is a biideal because $\Delta(u-1) = u\otimes (u-1)+(u-1)\otimes 1$ and $\epsilon(u-1)=0$. Composition $T(V)\hookrightarrow T(V\oplus\genfd u)\to T(V\oplus\genfd u)/I_u$ is an isomorphism of algebras (the inverse can easily be described); we transfer the comultiplication from $T(V\otimes\genfd u)/I_u$ to $T(V)$ along this isomorphism. By the above universal property, any coalgebra map $f\colon V\oplus\genfd u\to B$, where $B$ is a bialgebra, extends uniquely to a bialgebra map $T(V\oplus\genfd u)\to B$; if $f(u) = 1$ then it induces a bialgebra map $T(V\oplus\genfd u)/I_u\to B$. 

\begin{lemma}\label{lem:freebialgvar}
Suppose $V\in\Vect$ and $C = (V\oplus\genfd,\Delta,\epsilon)$ is a coalgebra such that $0\oplus 1$ is grouplike. Then $T(V)$ has a canonical bialgebra structure such that the inclusion $V\oplus\genfd\hookrightarrow T(V)$ is a coalgebra map and for any bialgebra $B$, each coalgebra map $C\to B$ mapping $0\oplus 1$ to $1_B$ admits a unique extension to a bialgebra map $T(V)\to B$.
\end{lemma}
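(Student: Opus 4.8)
The plan is to deduce the lemma by assembling the universal properties recorded in the paragraph immediately preceding it, so most of the work is bookkeeping rather than new content. Write $C = V \oplus \genfd u$ with $u = 0\oplus 1$ grouplike, and equip the tensor algebra $T(C)$ with the bialgebra structure whose comultiplication extends $\Delta_C$ multiplicatively. First I would confirm that the two-sided ideal $I_u$ generated by $u-1$ is a biideal: the identity $\Delta(u-1) = u\otimes(u-1) + (u-1)\otimes 1$ places $\Delta(u-1)$ in $I_u\otimes T(C) + T(C)\otimes I_u$, and since the latter is an ideal of $T(C)\otimes T(C)$ and $\Delta$ is an algebra map, the coideal condition propagates from the generator to all of $I_u$; similarly $\epsilon(u-1)=0$ gives $\epsilon(I_u)=0$. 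Hence $T(C)/I_u$ is a bialgebra.

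Next I would pin down the algebra isomorphism $\psi\colon T(V)\xrightarrow{\ \sim\ } T(C)/I_u$. By the universal property of the tensor algebra as a free algebra, the inclusion $V\hookrightarrow C\hookrightarrow T(C)\to T(C)/I_u$ extends to an algebra map $\psi$, while the assignment $v\mapsto v$, $u\mapsto 1$ extends to an algebra map $T(C)\to T(V)$ that annihilates $u-1$ and therefore descends to $T(C)/I_u\to T(V)$; comparing the two composites on the generators $V$ and $u$ shows they are mutually inverse. Transporting the comultiplication and counit of $T(C)/I_u$ along $\psi$ then defines the bialgebra structure on $T(V)$, with $\psi$ a bialgebra isomorphism by construction.

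I would then verify that the inclusion $\iota\colon C = V\oplus\genfd\hookrightarrow T(V)$, sending $v+\lambda\mapsto v+\lambda\cdot 1$, is a coalgebra map. Here I would use that $j\colon C\hookrightarrow T(C)$ is a coalgebra map and $q\colon T(C)\to T(C)/I_u$ is a bialgebra map, so $\psi^{-1}\circ q\circ j$ is a coalgebra map $C\to T(V)$; evaluating on $v\in V$ and on $u$ (which $q$ carries to the class of $1$) identifies this composite with $\iota$. For the universal property, given a bialgebra $B$ and a coalgebra map $f\colon C\to B$ with $f(u)=1_B$, the universal property of $T(C)$ yields a unique bialgebra map $\tilde f\colon T(C)\to B$ extending $f$; since $\tilde f(u-1)=1_B-1_B=0$ it factors as $\tilde f = \bar f\circ q$, and $g:=\bar f\circ\psi\colon T(V)\to B$ is a bialgebra map satisfying $g\circ\iota = \bar f\circ q\circ j = \tilde f\circ j = f$.

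Uniqueness of $g$ follows because $T(V)$ is generated as an algebra by $\iota(V)$, on which any bialgebra extension of $f$ is forced to equal $g$. Since the preceding paragraph already supplies the biideal computation, the inverse of $\psi$, and the relevant free-bialgebra universal property, I do not expect a genuine obstacle; the only points requiring care are precisely the two compatibility checks above, namely confirming that $\iota$ is a coalgebra map \emph{for the transferred structure} (which rests on tracking the identification $\psi$ faithfully) and that the extension is unique. The grouplike hypothesis on $0\oplus 1$ is what makes all of this work, as it is exactly $\Delta(u)=u\otimes u$ and $\epsilon(u)=1$ that force $\Delta(u-1)$ into $I_u\otimes T(C)+T(C)\otimes I_u$ and legitimize the quotient.
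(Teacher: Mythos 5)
Your proof is correct and follows exactly the paper's own route: the paper's ``proof'' of this lemma is precisely the paragraph preceding it (the biideal $I_u$, the algebra isomorphism $T(V)\cong T(C)/I_u$, transfer of the comultiplication, and factoring the universal property of $T(C)$ through the quotient), and you have simply assembled and verified those steps in detail. The extra care you take with the coalgebra-map property of the inclusion under the transferred structure and with uniqueness via generation by $V$ is exactly the bookkeeping the paper leaves implicit.
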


\subsection{Yetter--Drinfeld module algebras}
\label{ss:YD}
In this subsection, fix a Hopf $\genfd$-algebra $H = (H,\Delta,\epsilon)$ with comultiplication $\Delta\colon h\mapsto \sum h_{(1)}\otimes h_{(2)}$ and counit $\epsilon\colon H\to\genfd$. Recall that the category $\mathcal{M}_H$ of right $H$-modules is monoidal: if $(M,\blacktriangleleft_M)$ and $(N,\blacktriangleleft_N)$ are $H$-modules then their tensor product is $\genfd$-module $M\otimes_\genfd N$ with $H$-action $\blacktriangleleft\colon(m\otimes n)\otimes h\mapsto (m\blacktriangleleft_M h_{(1)})\otimes(n\blacktriangleleft_N h_{(2)})$ and the unit object is $\genfd$ with action $c\blacktriangleleft h =\epsilon(h) c$, for $m\in M$, $n\in N$, $h\in H$, $c\in\genfd$.
A right $H$-module algebra $A$ is a monoid in $\mathcal{M}_H$: a right $H$-module $(A,\blacktriangleleft\colon A\otimes H\to A)$ with multiplication $\cdot$ such that $\sum (a\blacktriangleleft h_{(1)})\cdot(b\blacktriangleleft h_{(2)}) = (a\cdot b)\blacktriangleleft h$ and $1\blacktriangleleft h = \epsilon(h) 1$. One can then form a \emph{smash product algebra} $H\sharp A$ with underlying $\genfd$-vector space $H\otimes A$ and associative multiplication $\cdot$ given by $(h\sharp a)\cdot (k\sharp b) := h k_{(1)}\sharp (a\blacktriangleleft k_{(2)}) b$ where $h\sharp a$ is an alias for $h\otimes a\in H\sharp A$. We often identify $a\in A$ with $a\sharp 1$ and $h\in H$ with $1\sharp h\in A\sharp H$ (thus for $a,b\in A$, $h,k\in H$, $a\cdot (h\sharp b)$ denotes $(1\sharp a)\cdot(h\sharp b)$ and $h\cdot (k\sharp b) = (h k)\sharp b$). We extend $\blacktriangleleft$ to a right action, also denoted $\blacktriangleleft$, of $H\sharp A$ on $A$ by setting $a\blacktriangleleft (h\sharp b) := (a\blacktriangleleft h) b\in A$.

A right-left \emph{Yetter--Drinfeld $H$-module} $(M,\blacktriangleleft,\lambda)$ is a unital right $H$-module $(M,\blacktriangleleft)$ with
a left $H$-coaction $\lambda\colon M\to H\otimes M$,
$m\mapsto \lambda(m) = \sum m_{[-1]}\otimes m_{[0]}$,
satisfying Yetter--Drinfeld compatibility condition
\begin{equation}\label{eq:YDorig}
f_{(2)} (m\blacktriangleleft f_{(1)})_{[-1]}  \otimes (m\blacktriangleleft f_{(1)})_{[0]} = m_{[-1]}f_{(1)}\otimes (m_{[0]}\blacktriangleleft f_{(2)}),\quad \text{ for all } m\in M, f\in H.
\end{equation}
Morphisms of Yetter--Drinfeld modules are morphisms of underlying modules which are also morphisms of comodules and the tensor product of Yetter--Drinfeld modules is the tensor product of the underlying $H$-modules equipped with the coaction $m\otimes n\mapsto n_{[-1]} m_{[-1]}\otimes m_{[0]}\otimes n_{[0]}$ (notice the order!). Thus we obtain a braided monoidal category ${}^H{\mathcal{YD}}_H$ of (right-left) Yetter--Drinfeld $H$-modules with braiding $\sigma_{M,N}\colon M\otimes N\to N\otimes M$ given by
$m\otimes n\mapsto (n\blacktriangleleft_N m_{[-1]})\otimes m_{[0]}$. For finite-dimensional $H$, ${}^H{\mathcal{YD}}_H$ is braided monoidally equivalent to the Drinfeld--Majid center of the monoidal category ${\mathcal{M}}_H$ of right $H$-modules.

If $A$ is a right $H$-module algebra with a left $H$-coaction $\lambda$ 
and if we identify the underlying vector spaces of $H\sharp A$ and $H\otimes A$,
then the Yetter--Drinfeld compatibility
may be rewritten in terms of the multiplication in $H\sharp A$, as
\begin{equation}\label{eq:YDsharp}
  f_{(2)}\cdot\lambda(a\blacktriangleleft f_{(1)})
  = \lambda(a)\cdot f,\quad \text{ for all } a\in A , f\in H.
\end{equation}

Monoids in ${}^H{\mathcal{YD}}_H$ are called (right-left) \emph{Yetter--Drinfeld
$H$-module algebras}. They are Yetter--Drinfeld modules with multiplication such that they become $H$-module algebras and $H^{\mathrm{op}}$-comodule algebras. Notice that an $H$-comodule is the same thing as an $H^{\mathrm{op}}$-comodule, but saying that it is a comodule algebra is different. If $(A,\blacktriangleleft,\lambda)$ is a Yetter--Drinfeld module and $\mu\colon A\otimes A\to A$ a $\genfd$-linear map, then $\mu$ is \emph{braided commutative} if $\sigma_{A,A}\circ\mu = \mu$, that is, $(a\blacktriangleleft b_{[-1]}) b_{[0]} = b a$ for all $a,b\in A$. An $H$-module algebra $(A,\blacktriangleleft)$ is \emph{braided commutative} if its multiplication is braided commutative.

\begin{lemma}\label{lem:}
  Consider an $H$-module algebra $(A,\blacktriangleleft)$ with multiplication $\mu$ and a coaction $\lambda$ so that $(A,\blacktriangleleft,\lambda)$ is a Yetter--Drinfeld $H$-module.
  \begin{enumerate}
	\item[(i)] Multiplication $\mu$ is braided commutative in ${}^H{\mathcal{YD}}_H$ if and only if for the extended action $\blacktriangleleft$ of the smash product $H\sharp A$  relation $a\blacktriangleleft\lambda(b) = b a$ holds.

  \item[(ii)] Multiplication $\mu$ is braided commutative if and only if
  all elements of the form $1\sharp a$, $a\in A$, commute with all elements of the form $\lambda(b)$, $b\in A$, viewed inside algebra $H\sharp A$.

  \item[(iii)] Suppose $\mu$ is braided commutative. Then $A$ is  an $H^{\mathrm{op}}$-comodule algebra (hence also a Yetter--Drinfeld module algebra) if and only if $\lambda$ considered as a map with values in smash product algebra $H\sharp A$ is antimultiplicative.
  \end{enumerate}
\end{lemma}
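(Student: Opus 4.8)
The plan is to treat the three parts in order, since each feeds the next. Part (i) should be a direct unwinding of definitions: by the formula for the extended action, $a\blacktriangleleft\lambda(b)=\sum a\blacktriangleleft(b_{[-1]}\sharp b_{[0]})=\sum (a\blacktriangleleft b_{[-1]})b_{[0]}$, whereas braided commutativity reads exactly $(a\blacktriangleleft b_{[-1]})b_{[0]}=ba$. Since both are asked for all $a,b\in A$, the two conditions literally coincide and the equivalence follows with no further computation.

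For part (ii) I would compute the two products $(1\sharp a)\cdot\lambda(b)$ and $\lambda(b)\cdot(1\sharp a)$ inside $H\sharp A$ and compare. The second is immediate from the smash product formula, the coproduct of the unit collapsing via the counit: $\lambda(b)\cdot(1\sharp a)=\sum b_{[-1]}\sharp b_{[0]}a$. The first produces $\sum (b_{[-1]})_{(1)}\sharp (a\blacktriangleleft (b_{[-1]})_{(2)})b_{[0]}$; here I would invoke coassociativity of the coaction, $\sum (b_{[-1]})_{(1)}\otimes (b_{[-1]})_{(2)}\otimes b_{[0]}=\sum b_{[-1]}\otimes (b_{[0]})_{[-1]}\otimes (b_{[0]})_{[0]}$, to rewrite it as $\sum b_{[-1]}\sharp (a\blacktriangleleft\lambda(b_{[0]}))$. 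Commutation of $1\sharp a$ with $\lambda(b)$ then becomes $\sum b_{[-1]}\sharp (a\blacktriangleleft\lambda(b_{[0]}))=\sum b_{[-1]}\sharp b_{[0]}a$. If $\mu$ is braided commutative, part (i) gives $a\blacktriangleleft\lambda(b_{[0]})=b_{[0]}a$ and the two sides agree; conversely, applying $\epsilon\otimes\id_A$ to the $H$-factor and using the counit axiom $\sum\epsilon(b_{[-1]})b_{[0]}=b$ recovers $a\blacktriangleleft\lambda(b)=ba$, hence braided commutativity via (i).

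For part (iii), assume $\mu$ braided commutative. The unit conditions for $A$ being an $H^{\op}$-comodule algebra and for antimultiplicativity of $\lambda$ both reduce to $\lambda(1_A)=1_H\otimes 1_A$, so the content lies in the multiplicative identity. The key computation is to show $\lambda(b)\cdot\lambda(a)=\sum b_{[-1]}a_{[-1]}\otimes a_{[0]}b_{[0]}$ in $H\sharp A$, i.e.\ that the smash-product composite untwists to the ordinary product in the tensor-product algebra $H^{\op}\otimes A$. I would factor $\lambda(b)=\sum (b_{[-1]}\sharp 1)\cdot(1\sharp b_{[0]})$, then use the commutation relation of part (ii) to move $1\sharp b_{[0]}$ to the right past $\lambda(a)$, and finally collapse the purely-$H$ and purely-$A$ factors using the smash product formula. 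Once this identity is in hand, the antimultiplicativity condition $\lambda(ab)=\lambda(b)\cdot\lambda(a)$ becomes $\sum (ab)_{[-1]}\otimes (ab)_{[0]}=\sum b_{[-1]}a_{[-1]}\otimes a_{[0]}b_{[0]}$, which is precisely the $H^{\op}$-comodule algebra condition, yielding the equivalence.

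The main obstacle is the reverse implication of (ii) together with the untwisting identity in (iii): these are the only places where braided commutativity does genuine work, the rest being bookkeeping in the smash product. The delicate point throughout is to keep the smash-product multiplication distinct from the tensor-product algebra structure on $H\otimes A$ and to track the opposite multiplication on the $H$-factor. In $\lambda(b)\cdot\lambda(a)$ the factor $b_{[-1]}$ must end up to the \emph{left} of $a_{[-1]}$, and it is exactly this reversal that forces the comodule algebra structure to be over $H^{\op}$ rather than $H$.
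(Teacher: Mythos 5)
Your proposal is correct, and on part (i) it coincides with the paper's own proof, which is exactly the one-line unwinding $a\blacktriangleleft\lambda(b)=(a\blacktriangleleft b_{[-1]})b_{[0]}$. Parts (ii) and (iii) are not proved in the paper at all (they are ``left to the reader'' with a pointer to Brzezi\'nski--Militaru), so your arguments there are a genuine completion rather than a parallel route; they check out: in (ii) the comodule coassociativity correctly turns $(1\sharp a)\cdot\lambda(b)$ into $\sum b_{[-1]}\sharp\bigl(a\blacktriangleleft\lambda(b_{[0]})\bigr)$, and applying $\epsilon\otimes\id_A$ with counitality gives the converse; in (iii) the factorization $\lambda(b)=(b_{[-1]}\sharp 1)\cdot(1\sharp b_{[0]})$ plus the commutation from (ii) yields $\lambda(b)\cdot\lambda(a)=\sum b_{[-1]}a_{[-1]}\sharp\, a_{[0]}b_{[0]}$, which identifies antimultiplicativity of $\lambda$ with the $H^{\mathrm{op}}$-comodule algebra condition, exactly matching the paper's convention for the Yetter--Drinfeld tensor coaction. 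Your closing observation that the order reversal $b_{[-1]}a_{[-1]}$ is what forces $H^{\mathrm{op}}$ rather than $H$ is also the right way to see why the lemma is stated as it is.
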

\begin{proof}
For (i) indeed, the left hand side is $a\blacktriangleleft (b_{[-1]}\sharp b_{[0]}) = (a\blacktriangleleft b_{[-1]}) b_{[0]}$. Parts (ii) and (iii) are left to the reader. They are implicit in~\cite{BrzMilitaru}.
\end{proof}
\subsection{Leibniz algebras}

Left and right Leibniz algebras are nonassociative algebras slightly generalizing Lie algebras by dropping the condition of antisymmetry.

A $\genfd$-vector space $\mathfrak{h}$ equipped with a linear map $[- , -] \colon \mathfrak{h} \otimes_\genfd \mathfrak{h} \to \mathfrak{h}$ is a \emph{left Leibniz algebra}~\cite{LodPir} if for every $x \in \mathfrak{h}$ the map  $\ad x\colon y  \mapsto [x,y]$ is a derivation on $\mathfrak{h}$, that is, if left Leibniz identity $[x,[y,z]] = [[x,y],z] + [y,[x,z]]$ holds for all $x,y,z \in \mathfrak{h}$. Let $\mathfrak{h}^l$ be a copy of vector space $\mathfrak{h}$, with elements denoted $l_x$,
$x\in\mathfrak{h}$, with operations transported via $x\mapsto l_x$.
Denote by $\mathfrak{h}_{{Lie}}$ the Lie algebra obtained as a quotient
of $\mathfrak{h}$ by two-sided ideal $I_{[x,x],x\in \mathfrak{h}}$ generated by all commutators $[x,x]$, $x\in\mathfrak{h}$. It is a Lie algebra and it is maximal in the sense that if $\mathrm{char}\,\genfd\neq 2$, every map $\mathfrak{h}\to\ggf$ to a Lie algebra $\ggf$ factors through $\mathfrak{h}_{{Lie}}$ (if $\mathrm{char}\,\genfd = 2$, relation $[x,x]=0$ is stronger than the antisymmetry).

\begin{lemma}\label{prop:uhlie} Let $\mathfrak{h}$ be a left Leibniz algebra. Universal enveloping algebra $U(\mathfrak{h}_{Lie})$ of Lie algebra $\mathfrak{h}_{Lie} \cong \mathfrak{h} / I_{[x,x],x\in \mathfrak{h}}$ is isomorphic to $T(\mathfrak{h}^l)/I^l$, where $I^l$ is the ideal in $T(\mathfrak{h}^l)$ generated by $l_{[x,y]} - l_x \otimes l_y + l_y \otimes l_x$, $x,y\in \mathfrak h$.
\end{lemma}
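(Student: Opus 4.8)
The plan is to exhibit $U(\mathfrak{h}_{Lie})$ and $T(\mathfrak{h}^l)/I^l$ as targets of mutually inverse algebra maps. First I would use the linear isomorphism $x\mapsto l_x$ to identify $T(\mathfrak{h}^l)$ with the tensor algebra $T(\mathfrak{h})$; under this identification the generators $l_{[x,y]}-l_x\otimes l_y+l_y\otimes l_x$ of $I^l$ become $-(x\otimes y-y\otimes x-[x,y])$, so that $I^l$ corresponds to the ideal generated by the usual enveloping-algebra relators $x\otimes y-y\otimes x-[x,y]$, $x,y\in\mathfrak{h}$. Consequently $T(\mathfrak{h}^l)/I^l$ is an associative algebra in which, writing $\bar{x}$ for the class of $l_x$, one has $\bar{x}\bar{y}-\bar{y}\bar{x}=\overline{[x,y]}$.

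To build a map $\Psi\colon U(\mathfrak{h}_{Lie})\to T(\mathfrak{h}^l)/I^l$ I would consider the linear map $\iota\colon\mathfrak{h}\to T(\mathfrak{h}^l)/I^l$, $x\mapsto\bar{x}$. The displayed relation shows $\iota$ is a homomorphism from the Leibniz algebra $\mathfrak{h}$ to $T(\mathfrak{h}^l)/I^l$ equipped with its commutator bracket. The key observation is that this commutator bracket is genuinely a Lie bracket, so $[\bar{x},\bar{x}]=0$ and hence $\iota([x,x])=0$ for every $x$; since $\ker\iota$ is a two-sided ideal of $\mathfrak{h}$ containing all $[x,x]$, it contains $I_{[x,x],x\in\mathfrak{h}}$, and $\iota$ descends to a Lie algebra map $\bar\iota\colon\mathfrak{h}_{Lie}\to T(\mathfrak{h}^l)/I^l$. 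The universal property of the enveloping algebra then extends $\bar\iota$ uniquely to an algebra map $\Psi$.

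For the inverse, let $q\colon\mathfrak{h}\to\mathfrak{h}_{Lie}$ be the projection and consider the algebra map $T(\mathfrak{h}^l)\to U(\mathfrak{h}_{Lie})$ determined by $l_x\mapsto q(x)$. Because $q(x)q(y)-q(y)q(x)=[q(x),q(y)]=q([x,y])$ in $U(\mathfrak{h}_{Lie})$, this map sends each generator $l_{[x,y]}-l_x\otimes l_y+l_y\otimes l_x$ of $I^l$ to zero and hence descends to $\Phi\colon T(\mathfrak{h}^l)/I^l\to U(\mathfrak{h}_{Lie})$, $\bar{x}\mapsto q(x)$. Comparing the two composites on the generating sets, $\Psi\Phi(\bar{x})=\Psi(q(x))=\bar\iota(q(x))=\iota(x)=\bar{x}$ and $\Phi\Psi(q(x))=\Phi(\bar{x})=q(x)$, shows that $\Phi$ and $\Psi$ are mutually inverse, yielding the claimed isomorphism.

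The step I would treat as the main obstacle is the factorisation of $\iota$ through $\mathfrak{h}_{Lie}$. I would justify it purely from $[\bar{x},\bar{x}]=0$ in the commutator bracket, which holds in every characteristic, rather than from the factorisation property recorded earlier for maps into Lie algebras, which was asserted only for $\mathrm{char}\,\genfd\neq 2$; using the commutator identity directly keeps the argument, and hence the isomorphism, valid over an arbitrary field. Everything else amounts to routine checks that the two universal-property extensions are well defined and agree on the respective generating sets.
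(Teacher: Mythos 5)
Your proof is correct. Note that the paper itself gives no argument for this lemma (the proof is ``left to the reader'' with a pointer to the preprint version), so there is nothing to compare against; your route --- identifying $I^l$ with the ideal of standard enveloping-algebra relators, producing $\Phi$ and $\Psi$ from the two universal properties, and checking they are mutually inverse on generators --- is the natural one and fills the gap completely. In particular, you are right to derive the factorisation of $\iota$ through $\mathfrak{h}_{Lie}$ from $[\bar{x},\bar{x}]=0$ in the commutator algebra (the kernel of a bracket-preserving map being a two-sided Leibniz ideal containing all $[x,x]$), rather than from the paper's maximal-quotient property, which was only asserted for $\mathrm{char}\,\genfd\neq 2$; this keeps the lemma valid over an arbitrary field, matching its stated generality.
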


\begin{proof}
Left to the reader. Included in the preprint version, {\tt arXiv:2308.15467}.
\end{proof}

We say that $\genfd$-vector space $\mathfrak{h}$ together with a linear map $[- , - ] \colon \mathfrak{h} \otimes_\genfd \mathfrak{h} \to \mathfrak{h}$ is a \emph{right Leibniz algebra} if for every $x \in \mathfrak{h}$ the map $y \mapsto [y, x ]$ is a derivation on $\mathfrak{h}$, that is, if the right Leibniz identity $[[x,y],z] = [[x,z],y] + [x,[y,z]]$ holds for all $x,y,z \in \mathfrak{h}$. By quotienting $\mathfrak{h}$ by the ideal generated by $[x,x], x \in \mathfrak{h},$ we get a maximal quotient Lie algebra, $\mathfrak{h} \to \mathfrak{h}_{Lie}$.

\begin{lemma}\label{prop:uhlier} Let $\mathfrak{h}$ be a right Leibniz algebra. Universal enveloping algebra $U(\mathfrak{h}_{Lie})$ of Lie algebra $\mathfrak{h}_{Lie} \cong \mathfrak{h} / I_{[x,x],x\in \mathfrak{h}}$ is isomorphic to $T(\mathfrak{h}^r)/I^r$, where $I^r$ is the ideal in $T(\mathfrak{h}^r)$ generated by $r_{[x,y]} - r_x \otimes r_y + r_y \otimes r_x$, $x,y\in\mathfrak h$. 
\end{lemma}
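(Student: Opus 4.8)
The plan is to exhibit mutually inverse algebra homomorphisms between $T(\mathfrak{h}^r)/I^r$ and $U(\mathfrak{h}_{Lie})$, invoking the universal properties of the tensor algebra and of the universal enveloping algebra. The argument runs exactly parallel to that of Lemma~\ref{prop:uhlie}: the distinction between left and right Leibniz algebras is immaterial for this particular statement, since the defining relators of $I^l$ and $I^r$ have the same shape and $\mathfrak{h}_{Lie}$ is a genuine Lie algebra in both cases (the right Leibniz identity is used only in establishing, prior to the lemma, that the bracket descends to $\mathfrak{h}_{Lie}$ and satisfies the Jacobi identity).

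First I would construct a map $U(\mathfrak{h}_{Lie})\to B$, writing $B:=T(\mathfrak{h}^r)/I^r$ and $\bar r_x$ for the image of $r_x$ in $B$. The defining relation reads $\bar r_{[x,y]} = \bar r_x\bar r_y - \bar r_y\bar r_x$, which says precisely that the linear map $\phi\colon\mathfrak{h}\to B$, $x\mapsto\bar r_x$, is a homomorphism from $\mathfrak{h}$ into the \emph{commutator Lie algebra} $B_{Lie}$ of the associative algebra $B$. Since $\phi$ annihilates the ideal $I_{[x,x],x\in\mathfrak{h}}$ (verified below), it descends to a Lie algebra map $\psi\colon\mathfrak{h}_{Lie}\to B_{Lie}$, and the universal property of the universal enveloping algebra extends $\psi$ uniquely to an algebra homomorphism $\bar\Psi\colon U(\mathfrak{h}_{Lie})\to B$.

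Conversely, by the universal property of the tensor algebra the linear assignment $r_x\mapsto\iota(\pi(x))$, where $\pi\colon\mathfrak{h}\to\mathfrak{h}_{Lie}$ is the canonical projection and $\iota\colon\mathfrak{h}_{Lie}\to U(\mathfrak{h}_{Lie})$ the canonical map, extends to an algebra map $\Phi\colon T(\mathfrak{h}^r)\to U(\mathfrak{h}_{Lie})$. I would then check that the generators of $I^r$ lie in $\Ker\Phi$: using that $\pi$ is a bracket homomorphism and that in $U(\mathfrak{h}_{Lie})$ the commutator of two elements of $\mathfrak{h}_{Lie}$ equals their Lie bracket, one computes $\Phi(r_{[x,y]} - r_x\otimes r_y + r_y\otimes r_x) = \iota\pi([x,y]) - [\iota\pi(x),\iota\pi(y)] = 0$. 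Hence $\Phi$ factors through an algebra map $\bar\Phi\colon B\to U(\mathfrak{h}_{Lie})$.

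Finally I would verify that $\bar\Phi$ and $\bar\Psi$ are mutually inverse by evaluating the two composites on algebra generators: $\bar\Phi(\bar r_x) = \iota\pi(x)$ and $\bar\Psi(\iota\pi(x)) = \bar r_x$, so $\bar\Phi\circ\bar\Psi = \id$ on the generators $\iota\pi(x)$ of $U(\mathfrak{h}_{Lie})$ and $\bar\Psi\circ\bar\Phi = \id$ on the generators $\bar r_x$ of $B$; as both composites are algebra maps, this suffices. The one point demanding care — and the closest thing to an obstacle — is the descent of $\phi$ to $\mathfrak{h}_{Lie}$, namely that $\phi$ annihilates the entire ideal $I_{[x,x],x\in\mathfrak{h}}$ and not merely its generators. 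This is immediate once one observes that $\Ker\phi$, being the kernel of a homomorphism of Leibniz algebras, is itself a two-sided ideal of $\mathfrak{h}$, and that it contains every $[x,x]$ because $[\bar r_x,\bar r_x]=0$ in $B$; hence it contains the ideal they generate. Everything else is a routine application of the two universal properties.
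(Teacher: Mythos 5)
Your proof is correct. The paper offers no argument to compare against --- the left-handed version (Lemma~\ref{prop:uhlie}) is explicitly ``left to the reader'' with a pointer to the arXiv preprint, and the present right-handed version is stated without proof as its mirror image --- but your construction of mutually inverse algebra maps via the universal properties of $T(\mathfrak{h}^r)$ and of $U(\mathfrak{h}_{Lie})$, including the kernel-is-a-two-sided-ideal observation needed to see that $x\mapsto \bar r_x$ annihilates all of $I_{[x,x],x\in\mathfrak{h}}$ rather than just its generators, is precisely the standard argument the authors intend.
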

%\begin{proof}
%	Analogous to proof of Lemma \ref{prop:uhlie}.
%\end{proof}

\section{$U(\mathfrak{h}_{{Lie}})$ as a Yetter--Drinfeld $\OO(\Aut(\mathfrak{h}))$-module algebra}
\label{sec:bcYD}

In this section, we prove the central result of this article: for any finite-dimensional Leibniz algebra $\mathfrak{h}$ over any field $\genfd$, the universal enveloping algebra $U(\mathfrak{h}_{{Lie}})$ of its maximal quotient Lie algebra $ \mathfrak{h}_{{Lie}}$ is a braided commutative Yetter--Drinfeld module algebra over the Hopf algebra $\OO(\Aut(\mathfrak{h}))$ of regular functions  on the algebraic group of automorphisms of~$\mathfrak{h}$. This result immediately implies that the smash product algebra $\OO(\Aut(\mathfrak{h}))\sharp U(\mathfrak{h}_{{Lie}})$ is a total algebra of a Hopf algebroid over $U(\mathfrak{h}_{Lie})^\op, U(\mathfrak{h}_{Lie})$, see Section~\ref{sec:Ha}.

\subsection{Hopf algebra $\OO(\Aut(L))$}\label{ss:Oaut}

Let $(L,\cdot_L)$ be any nonassociative algebra of finite dimension $n$ over a field $\genfd$. The general linear group of the underlying vector space, $\mathrm{GL}(L)$ is an affine algebraic group with algebra of regular functions $\OO(\mathrm{GL}(L))$ that is therefore a Hopf algebra via $\Delta(f)(M,N) = f(M\circ N)$ and $\epsilon(f) = f(1)$ for any $M,N\in\mathrm{GL}(L)$~\cite{Cartier}. For a chosen ordered basis $\mathbf{b} = (x_1,\ldots,x_n)$ of $L$, interpreting matrices as operators amounts to an isomorphism $\iota_{\mathbf{b}}\colon\mathrm{GL}(n,\genfd)\stackrel\cong\to\mathrm{GL}(L)$. Structure constants $C_{ij}^k = C_{\mathbf{b} ij}^k$ are defined by
\begin{equation}\label{eq:Lstrconst}
x_i\cdot x_j = \sum_{k=1}^n C_{\mathbf{b}ij}^k x_k,\quad i,j\in\{1,\ldots,n\},
\end{equation}
and we introduce as algebra generators of $\OO(\mathrm{GL}(n,\genfd))$ regular functions $U^i_j\colon M\mapsto M^i_j,\bar{U}^i_j\colon M\mapsto(M^{-1})^i_j$, where $M^i_j$ is the $(i,j)$-th entry of matrix $M\in\mathrm{GL}(n,\genfd)$. As an abstract algebra, $\OO(\mathrm{GL}(n,\genfd))$ is the free algebra on $n^2$ generators $U^i_j,\bar{U}^i_j$ modulo the $n^2$ relations $\sum_k U^i_k\bar{U}^k_j = \delta^i_j$. The comultiplication is then given by $\Delta(U^i_j) = \sum_k U^i_k\otimes U^k_j$ and $\Delta(\bar{U}^i_j) = \sum_k \bar{U}^k_j\otimes\bar{U}^i_k$ with counit $\epsilon(U^i_j) = \epsilon(\bar{U}^i_j) =\delta^i_j$. By definition, an element $\psi\in\mathrm{GL}(L)$ is an automorphism if $\psi(a)\cdot_L \psi(b) = \psi(a\cdot_L b)$ for all $a,b\in L$. These relations cut out the subgroup $\mathrm{Aut}(L)\subset\mathrm{GL}(L)$. To see that it is a Zariski closed subgroup, write $a = \sum_k a^k x_k$, $b = \sum_k b^k x_k$ and observe that this condition amounts to a system of $n^3$ polynomial equations in $\mathrm{GL}(n,\genfd)$,
$$
\sum_{r} C_{ij}^r \psi_r^k = \sum_{l,m} \psi_i^l \psi_j^m C_{lm}^k.
$$
In other words, $\iota_{\mathbf{b}}$ induces an identification $\iota_{\mathbf{b}}^*\colon\mathcal{O}(\Aut(L))\stackrel{\cong}\to\mathcal{O}(\Aut(L))_{\mathbf{b}}$ with the quotient $\OO(\Aut(L))_{\mathbf{b}}$ of $\OO(\mathrm{GL}(n,\genfd))$ by the ideal $I_{\Aut(L)\mathbf{b}}$ generated by relations
\begin{equation}\label{eq:CUU}
\sum_{l,m} C_{lm}^k U_i^l U_j^m  = \sum_{r}  U_r^k C_{ij}^r.
\end{equation}
Regarding that the inclusion of subvarieties $\mathrm{Aut}(L)\subset \mathrm{GL}(L)$ is also an inclusion of groups, this ideal is Hopf and $\mathcal{O}(\mathrm{Aut}(L))$ is the quotient Hopf algebra of functions on the subgroup. One can also directly check that the ideal $I_{\Aut(L)\mathbf{b}}$ is a Hopf ideal.

Denote by $\GG^i_j = \mathcal{G}^i_{\mathbf{b}j} = U^i_j + I_{\Aut(L)\mathbf{b}}$ and $\bar\GG^i_j = \bar{\mathcal{G}}^i_{\mathbf{b}j} = \bar{U}^i_j +I_{\Aut(L)\mathbf{b}}$ the generators of $\mathcal{O}(\mathrm{Aut}(L))_{\mathbf{b}}$.
If $T = (T^i_j)_{i,j=1}^n$ is a transition matrix to a basis $\mathbf{b}' = (x'_1,\ldots,x'_n)$, $x'_j = \sum_i T^i_j x_i$, then $\iota_{\mathbf{b}'}^{-1}\circ\iota_{\mathbf{b}}\colon\mathrm{GL}(n,\genfd)\to\mathrm{GL}(n,\genfd)$, $A \mapsto T A T^{-1}$.
Then $\GG^i_{\mathbf{b}'j}\mapsto\sum_{l,m} T^i_l\GG^l_{\mathbf{b}m} {T^{-1}}^m_j$ extends to a Hopf algebra isomorphism $\theta_{\mathbf{b}\mathbf{b}'}\colon\OO(\Aut(L))_{\mathbf{b}'}\to\OO(\Aut(L))_{\mathbf{b}}$ and $\theta_{\mathbf{b}'\mathbf{b}}=\theta_{\mathbf{b}\mathbf{b}'}^{-1}$.  This implies $\iota^{*-1}_{\mathbf{b'}}(\GG^i_{\mathbf{b}'j})= \sum_{l,m} T^i_l\iota^{*-1}_{\mathbf{b}}(\GG^l_{\mathbf{b}m}){T^{-1}}^m_j$ within $\OO(\Aut(\mathfrak{h}))$.
When it is clear which basis $\mathbf{b}$ is fixed, $\iota_{\mathbf{b}}^{*-1}(\GG^i_{\mathbf{b}j})\in\OO(\Aut(L))$ will also be denoted by $\GG^i_{\mathbf{b}j}$ or simply $\GG^i_j$. Assuming the identification $\iota_{\mathbf{b}}^{*-1}$, we write 
\begin{equation}\label{eq:G'TGTm}
\GG^i_{\mathbf{b}'j} = \sum_{l,m} T^i_l\GG^l_{\mathbf{b}m} {T^{-1}}^m_j.
\end{equation}
Assuming the identification $\iota_{\mathbf{b}}^{*-1}$, if $\psi$ is an automorphism of $L$, $\sum_i\GG^i_{\mathbf{b}j}(\psi)x_i = \psi(x_j)$.
%Use (\ref{eq:G'TGTm}).
The standard reasoning above is summarized in the following proposition.

\begin{proposition}\label{prop:OAut(L)} Let $L$ be a nonassociative algebra of finite dimension $n$ with a $\genfd$-basis $\mathbf{b}$ and structure constants $C^k_{ij} = C^k_{\mathbf{b}ij}$~(\ref{eq:Lstrconst}). 
Hopf algebra $\mathcal{O}(\Aut(L))$ of regular functions on the affine algebraic group of automorphisms of  $L$ is as an algebra isomorphic to a commutative algebra $\mathcal{O}(\Aut(L))_{\mathbf{b}}$ with $2 n^2$-generators $\mathcal{G}^i_j$, $\bar{\mathcal{G}}^i_j$, $i,j\in\{1,\ldots,n\}$ and defining relations
\begin{equation}
\sum_{l,m} C_{lm}^k \mathcal{G}_i^l \mathcal{G}_j^m  = \sum_{r}  \mathcal{G}_r^k C_{ij}^r,\quad \sum_k \mathcal{G}^i_k\bar{\mathcal{G}}^k_j = \delta^i_j = \sum_k\bar{\mathcal{G}}^i_k\mathcal{G}^k_j,\quad i,j,k\in\{1,\ldots,n\}.
\end{equation}
As a direct consequence the following
identities hold for all $i,j, k \in \{1,\ldots,n\}$:
\begin{equation}\label{eq:CGbarG}
\sum_{m,p} {\mathcal G}_p^i C_{mj}^p \bar{\mathcal G}_k^m  = \sum_m C_{km}^i {\mathcal G}_j^m ,
\end{equation}
\begin{equation}\label{eq:CbarGbarG}
\sum_{l,m} C_{lm}^k \bar{\mathcal{G}}_i^l \bar{\mathcal{G}}_j^m = \sum_{r} \bar{\mathcal{G}}_r^k C_{ij}^r.
\end{equation}
Isomorphism $\mathcal{O}(\Aut(L))_{\mathbf{b}}\cong \mathcal{O}(\Aut(L))$ is a Hopf algebra isomorphism if $\mathcal{O}(\Aut(L))_{\mathbf{b}}$ is given the unique comultiplication $\Delta$ and counit $\epsilon$ which are algebra maps satisfying
\begin{equation}\label{eq:OAutcopr}
 \Delta(\mathcal{G}^i_j) = \sum_k \mathcal{G}^i_k\otimes \mathcal{G}^k_j,\quad\Delta(\bar{\mathcal{G}}^i_j) = \sum_k \bar{\mathcal{G}}^k_j\otimes\bar{\mathcal{G}}^i_k,\quad\epsilon(\mathcal{G}^i_j) = \epsilon(\bar{\mathcal{G}}^i_j) =\delta^i_j,
\end{equation}
and the antipode $S$ satisfying $S(\bar{\mathcal{G}}^i_j) = \mathcal{G}^i_j$,
$S(\mathcal{G}^i_j) = \bar{\mathcal{G}}^i_j$ for all $i,j\in\{1,\ldots,n\}$.
\end{proposition}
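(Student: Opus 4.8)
The plan is to reduce every assertion to the presentation of $\mathcal{O}(\mathrm{GL}(n,\genfd))$ recalled in Section~\ref{ss:Oaut} together with the two families of relations cutting out $\Aut(L)$, and then to verify the coalgebra and antipode data directly on the generators. For the algebra isomorphism I would simply invoke the identification $\iota_{\mathbf{b}}^*\colon\mathcal{O}(\Aut(L))\xrightarrow{\cong}\mathcal{O}(\Aut(L))_{\mathbf{b}}$ already constructed above, where $\mathcal{O}(\Aut(L))_{\mathbf{b}}=\mathcal{O}(\mathrm{GL}(n,\genfd))/I_{\Aut(L)\mathbf{b}}$ and $\mathcal{G}^i_j,\bar{\mathcal{G}}^i_j$ are the images of $U^i_j,\bar{U}^i_j$. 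The stated presentation is then read off: the inverse relations $\sum_k\mathcal{G}^i_k\bar{\mathcal{G}}^k_j=\delta^i_j$ descend from the defining relations of $\mathcal{O}(\mathrm{GL}(n,\genfd))$ (and, for square matrices over a commutative ring, a one-sided inverse is automatically two-sided, which yields also $\sum_k\bar{\mathcal{G}}^i_k\mathcal{G}^k_j=\delta^i_j$), while the automorphism relation is exactly~(\ref{eq:CUU}) pushed to the quotient. Reading $\mathcal{O}(\Aut(L))$ as the Hopf algebra representing the automorphism group scheme makes this identification hold with no reducedness hypothesis on $I_{\Aut(L)\mathbf{b}}$.

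Next I would derive the two consequence relations purely formally. For~(\ref{eq:CGbarG}) I multiply the automorphism relation $\sum_{l,m}C^k_{lm}\mathcal{G}^l_i\mathcal{G}^m_j=\sum_r C^r_{ij}\mathcal{G}^k_r$ by $\bar{\mathcal{G}}$ against the column index of one of the two $\mathcal{G}$ factors and contract using $\sum_k\mathcal{G}^i_k\bar{\mathcal{G}}^k_j=\delta^i_j$; collapsing the first factor leaves precisely~(\ref{eq:CGbarG}) (contracting the second factor instead produces the opposite ordering of the structure constants). For~(\ref{eq:CbarGbarG}) I conjugate: contracting both $\mathcal{G}$ factors of the automorphism relation against $\bar{\mathcal{G}}$ gives $C^k_{ab}=\sum_r\mathcal{G}^k_r\sum_{i,j}C^r_{ij}\bar{\mathcal{G}}^i_a\bar{\mathcal{G}}^j_b$, and one further contraction against $\bar{\mathcal{G}}$ removes the remaining $\mathcal{G}$, yielding~(\ref{eq:CbarGbarG}); conceptually this is just the statement that the inverse of an automorphism is again an automorphism.

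Finally I would upgrade the algebra isomorphism to a Hopf isomorphism. Since $\iota_{\mathbf{b}}$ is an isomorphism of algebraic groups its comorphism is a Hopf map, so it suffices to show that the matrix-coalgebra data of $\mathcal{O}(\mathrm{GL}(n,\genfd))$—yielding~(\ref{eq:OAutcopr}) and $S(\mathcal{G}^i_j)=\bar{\mathcal{G}}^i_j$, $S(\bar{\mathcal{G}}^i_j)=\mathcal{G}^i_j$ on the images of $U,\bar U$—descends to the quotient, i.e.\ that $I_{\Aut(L)\mathbf{b}}$ is a Hopf ideal. Writing $g^k_{ij}:=\sum_{l,m}C^k_{lm}U^l_iU^m_j-\sum_r C^r_{ij}U^k_r$ for the generator of $I_{\Aut(L)\mathbf{b}}$, the counit check is immediate since $\epsilon(g^k_{ij})=C^k_{ij}-C^k_{ij}=0$, and the antipode check reduces to the observation that $S(g^k_{ij})$ is, up to commutativity, the difference of the two sides of~(\ref{eq:CbarGbarG}) written in the $\bar U$'s, hence lies in $I_{\Aut(L)\mathbf{b}}$ by the previous paragraph. \emph{The one genuinely nontrivial step, which I expect to be the main obstacle, is that $I_{\Aut(L)\mathbf{b}}$ is a coideal}, i.e.\ $\Delta(g^k_{ij})\in I_{\Aut(L)\mathbf{b}}\otimes\mathcal{O}+\mathcal{O}\otimes I_{\Aut(L)\mathbf{b}}$. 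I would expand $\Delta(g^k_{ij})$ via $\Delta(U^l_i)=\sum_a U^l_a\otimes U^a_i$ and apply~(\ref{eq:CUU}) twice: first to rewrite the left tensor factor $\sum_{l,m}C^k_{lm}U^l_aU^m_b$ as $\sum_c C^c_{ab}U^k_c$ modulo $I_{\Aut(L)\mathbf{b}}$ (the error absorbed into $I_{\Aut(L)\mathbf{b}}\otimes\mathcal{O}$), then to rewrite the right tensor factor $\sum_{a,b}C^c_{ab}U^a_iU^b_j$ as $\sum_r C^r_{ij}U^c_r$ modulo $I_{\Aut(L)\mathbf{b}}$ (the error absorbed into $\mathcal{O}\otimes I_{\Aut(L)\mathbf{b}}$); the surviving principal terms then cancel against $\Delta(\sum_r C^r_{ij}U^k_r)$. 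This shows $I_{\Aut(L)\mathbf{b}}$ is a Hopf ideal, so the quotient carries the asserted Hopf structure and $\iota_{\mathbf{b}}^*$ transports it to the Hopf structure of $\mathcal{O}(\Aut(L))$.
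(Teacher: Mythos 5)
Your proposal is correct and follows essentially the same route as the paper: the paper presents this proposition as a summary of the ``standard reasoning'' in Subsection~\ref{ss:Oaut} (identification via $\iota_{\mathbf{b}}^*$, the relations~(\ref{eq:CUU}), and the fact that $I_{\Aut(L)\mathbf{b}}$ is a Hopf ideal, noting that this ``can also be directly checked''). Your contribution is simply to carry out in detail the direct checks the paper leaves implicit --- the coideal computation $\Delta(g^k_{ij})\equiv\sum_c U^k_c\otimes g^c_{ij}$ modulo $I\otimes\mathcal{O}$, the antipode check via~(\ref{eq:CbarGbarG}), and the contraction arguments for~(\ref{eq:CGbarG})--(\ref{eq:CbarGbarG}) --- all of which are accurate.
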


\subsection{Hopf pairing}

If $(B,\Delta,\epsilon)$ is a $\genfd$-bialgebra, then a \emph{differentiation} of $B$ is any $\genfd$-linear map $D\colon B\to\genfd$ such that Leibniz rule $D(b c) = D(b)\epsilon(c) + \epsilon(b)D(c)$ holds. In other words, it is a $\genfd_\epsilon$-valued derivation of $B$, where $\genfd_\epsilon$ is $\genfd$ with the trivial $B$-(bi)module structure coming from the counit. In Hopf algebraic language, a differentiation is a primitive element in the restricted dual bialgebra $B^\circ$. The following lemma is standard and elementary.

\begin{lemma}\label{lem:bialgdiff}
Let $B$ be any bialgebra such that its underlying algebra is the free unital commutative algebra with a set of free generators $F_B$. There is a canonical isomorphism between the vector space $\genfd^{F_B}$ of set maps $F_B\to\genfd$ and the space of differentiations of $B$ which extend these maps.
\end{lemma}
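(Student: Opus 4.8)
The plan is to realize every differentiation of $B$ as an algebra map into the dual numbers $\Lambda := \genfd[t]/(t^2) = \genfd\oplus\genfd t$, and then to extract the isomorphism from the universal property of the free commutative algebra. Let $\pi\colon\Lambda\to\genfd$ be the algebra map sending $t\mapsto 0$. For a $\genfd$-linear map $D\colon B\to\genfd$ I would form $\tilde D\colon B\to\Lambda$, $b\mapsto\epsilon(b)+D(b)\,t$. Expanding $\tilde D(bc)=\tilde D(b)\tilde D(c)$ modulo $t^2$ and using that $\epsilon$ is multiplicative, one sees that $\tilde D$ is an algebra homomorphism exactly when $D(bc)=D(b)\epsilon(c)+\epsilon(b)D(c)$, i.e.\ when $D$ is a differentiation. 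Conversely, any algebra map $B\to\Lambda$ lifting $\epsilon$ (that is, composing with $\pi$ to give $\epsilon$) has the form $b\mapsto\epsilon(b)+D(b)\,t$ for a unique $\genfd$-linear $D$, which is then forced to be a differentiation. Thus differentiations of $B$ correspond bijectively and $\genfd$-linearly to algebra maps $\tilde D\colon B\to\Lambda$ satisfying $\pi\circ\tilde D=\epsilon$.

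Next I would invoke the standard universal property: since the underlying algebra of $B$ is free commutative on $F_B$ and $\Lambda$ is commutative, algebra maps $B\to\Lambda$ are in natural bijection with set maps $F_B\to\Lambda$. Writing such a set map as $f\mapsto\alpha(f)+\beta(f)\,t$ with $\alpha,\beta\colon F_B\to\genfd$, the lifting condition $\pi\circ\tilde D=\epsilon$ says that the two algebra maps $\pi\circ\tilde D$ and $\epsilon$ agree on the generators, hence everywhere, forcing $\alpha=\epsilon|_{F_B}$. The remaining free datum is therefore exactly $\beta\in\genfd^{F_B}$, which equals $D|_{F_B}$. This identifies the set of differentiations with $\genfd^{F_B}$ via restriction $D\mapsto D|_{F_B}$, the inverse sending $\phi\in\genfd^{F_B}$ to the $t$-coefficient of the unique algebra extension of $f\mapsto\epsilon(f)+\phi(f)\,t$.

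Finally, the restriction map $D\mapsto D|_{F_B}$ is visibly $\genfd$-linear, so its inverse is linear as well and we obtain the claimed isomorphism of vector spaces; it is canonical because no data beyond $(B,\epsilon)$ and the given free generating set enter. I do not anticipate a genuine obstacle here: the only point to watch is that $F_B$ may be infinite, but the universal property of the free commutative algebra holds for an arbitrary generating set, so the argument is unchanged. The entire content is the observation that the Leibniz rule relative to $\epsilon$ is precisely the first-order (dual-number) shadow of multiplicativity into $\Lambda$.
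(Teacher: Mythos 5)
Your proof is correct. Note that the paper itself offers no proof of this lemma at all --- it is introduced with the remark that it is ``standard and elementary'' --- so there is no written argument of the authors' to compare against; the implicit standard argument is the direct one: since $B$ is free commutative on $F_B$, the commutative monomials in the generators form a $\genfd$-basis of $B$, the $\epsilon$-Leibniz rule forces $D(1)=0$ and $D(f_1\cdots f_k)=\sum_i D(f_i)\prod_{j\neq i}\epsilon(f_j)$, so $D$ is determined by $D|_{F_B}$, and conversely one defines $D$ on the monomial basis by this formula and checks the Leibniz rule. Your dual-number argument reaches the same conclusion by a genuinely different (and cleaner) route: encoding a differentiation $D$ as the unital algebra map $b\mapsto\epsilon(b)+D(b)\,t$ into $\genfd[t]/(t^2)$ lifting $\epsilon$ converts the Leibniz rule into first-order multiplicativity, after which both existence and uniqueness of the extension from $F_B$ come for free from the universal property of the free commutative algebra, with no well-definedness check on a basis; it also makes transparent why the comultiplication of $B$ plays no role. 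One small point you should make explicit: a unital algebra map must send $1_B$ to $1$, i.e.\ you need $D(1)=0$ in the forward direction; this is not an extra hypothesis, since the Leibniz rule gives $D(1)=D(1\cdot 1)=2D(1)$, hence $D(1)=0$. With that one-line remark added, your argument is complete.
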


Assume $V\in\Vect$ and $C=V\oplus\genfd$ is a coalgebra such that $\Delta(1)=1\otimes 1$ and $\Delta(v) = 1\otimes v+v\otimes 1$ for all $v\in V$. Suppose $V$ is paired with $B$ such that map $v\mapsto\langle v,-\rangle$ corestricts to a coalgebra map $\phi_1\colon C\to B^\circ$ for which $\phi_1(1) = 1_{B^\circ}=\epsilon_B$. Then $\phi_1(v)$ is a differentiation of $B$. Conversely, by Lemma~\ref{lem:bialgdiff} each such $\phi_1(v)$ is determined by $\phi_1(v)|_{F_B}$, where the values for the latter can be chosen independently. 

\begin{proposition}\label{prop:hpair}
	Let $\mathfrak{h}$ be a left Leibniz $\genfd$-algebra with a vector space basis $\mathbf{b} = (x_1, \ldots, x_n)$ and structure constants $C^i_{jk}$ determined from $[x_j,x_k] = \sum_i C^i_{jk} x_i$, $j,k\in\{1, \ldots, n\}$. In the notation of Subsection~\ref{ss:Oaut}, $\mathcal{G}^i_j, \bar{\mathcal{G}}^i_j$, $i,j\in\{1,\ldots,n\}$ are the generators of the algebra $\OO(\Aut(\mathfrak{h}))$. Denote also by $\tilde x$ the image of $x\in \mathfrak{h}$ in $\mathfrak{h}_{Lie}$.

Then there is a well defined and unique Hopf pairing 
	$$\langle-,-\rangle\colon U(\mathfrak{h}_{Lie}) \otimes \OO(\Aut(\mathfrak{h}))\to\genfd$$ such that $\langle \tilde{x}_k, \GG^i_j \rangle = C^i_{kj}$ for all $i,j,k \in \{1,\ldots,n \}$. 
This Hopf pairing does not depend on the choice of basis. 	
\end{proposition}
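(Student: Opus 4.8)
The plan is to produce the pairing as a bialgebra map $\phi_1\colon U(\mathfrak h_{Lie})\to\OO(\Aut(\mathfrak h))^\circ$, and then invoke the correspondence recorded in Subsection~\ref{ss:pairing}: a Hopf pairing between these two Hopf algebras is exactly the datum of such a $\phi_1$, via $\langle b,f\rangle:=\phi_1(b)(f)$, and a bialgebra map between Hopf algebras respects the antipodes automatically. The organizing observation is that the prescribed values $\langle\tilde x_k,\GG^i_j\rangle=C^i_{kj}$ are precisely the matrix entries $(\ad x_k)^i_j$ of the operator $\ad x_k\colon x_j\mapsto[x_k,x_j]=\sum_i C^i_{kj}x_i$, and that, since $\mathfrak h$ is a left Leibniz algebra, each $\ad x$ is a derivation of $\mathfrak h$, i.e.\ a tangent vector to $\Aut(\mathfrak h)$ at the identity. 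Accordingly, for $x\in\mathfrak h$ I would set $D_x$ to be the differentiation of $\OO(\Aut(\mathfrak h))$ with $D_x(\GG^i_j)=(\ad x)^i_j$; by Subsection~\ref{ss:pairing} a differentiation is the same thing as a primitive element of $\OO(\Aut(\mathfrak h))^\circ$.

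The first, and main, point is that $D_{x_k}$ descends to the quotient $\OO(\Aut(\mathfrak h))$. Lifting to the presentation of $\OO(\mathrm{GL})$ in Subsection~\ref{ss:Oaut}, the prescription $D_k(U^i_j)=C^i_{kj}$ determines a differentiation (Lemma~\ref{lem:bialgdiff} for the polynomial generators, uniquely extended to the localization), with $D_k(\bar U^i_j)=-C^i_{kj}$ forced by differentiating $\sum_m U^i_m\bar U^m_j=\delta^i_j$. Because a $\genfd_\epsilon$-valued derivation annihilates an ideal as soon as it annihilates ideal generators of counit zero, it suffices to evaluate $D_k$ on the generators~(\ref{eq:CUU}) of $I_{\Aut(\mathfrak h)\mathbf b}$. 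A Leibniz-rule computation gives
$$
D_k\Bigl(\sum_{l,m}C^p_{lm}U^l_iU^m_j-\sum_r U^p_r C^r_{ij}\Bigr)=\sum_r C^r_{ki}C^p_{rj}+\sum_r C^r_{kj}C^p_{ir}-\sum_r C^r_{ij}C^p_{kr},
$$
which vanishes by the left Leibniz identity $[x_k,[x_i,x_j]]=[[x_k,x_i],x_j]+[x_i,[x_k,x_j]]$; this is precisely the assertion that $\ad x_k$ is a derivation of $\mathfrak h$. Hence each $D_k$, and by linearity each $D_x$, descends to a primitive element of $\OO(\Aut(\mathfrak h))^\circ$.

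Next I would assemble $\phi_1$. Primitive elements form a Lie algebra under the commutator of the convolution product, and testing on the algebra generators $\GG^p_q$ via $\Delta(\GG^p_q)=\sum_s\GG^p_s\otimes\GG^s_q$ shows
$$
D_{x_j}*D_{x_k}-D_{x_k}*D_{x_j}=\sum_i C^i_{jk}\,D_{x_i}=D_{[x_j,x_k]},
$$
which is again the left Leibniz identity, now read as $\ad[x_j,x_k]=[\ad x_j,\ad x_k]$. In particular $D_{[x,x]}=[D_x,D_x]=0$, so $x\mapsto D_x$ is a homomorphism of Leibniz algebras into the Lie algebra of differentiations and, in every characteristic, factors through the maximal Lie quotient $\mathfrak h_{Lie}$. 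Using the presentation $U(\mathfrak h_{Lie})\cong T(\mathfrak h^l)/I^l$ of Lemma~\ref{prop:uhlie}, I extend $l_x\mapsto D_x$ to a bialgebra map $T(\mathfrak h^l)\to\OO(\Aut(\mathfrak h))^\circ$ by Lemma~\ref{lem:freebialgvar} (the hypotheses hold because each $D_x$ is primitive and $x\mapsto D_x$ is linear), and the displayed identity says exactly that this map kills the generators $l_{[x,y]}-l_x l_y+l_y l_x$ of $I^l$, hence descends to $\phi_1\colon U(\mathfrak h_{Lie})\to\OO(\Aut(\mathfrak h))^\circ$. It is a bialgebra map: it is multiplicative by construction and carries the primitive algebra generators $\tilde x_k$ to the primitives $D_k$, so $\Delta\circ\phi_1$ and $(\phi_1\otimes\phi_1)\circ\Delta$ are algebra maps agreeing on generators, hence equal.

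Finally, uniqueness and basis independence. Any Hopf pairing corresponds to such a $\phi_1$, which is determined by its values on the algebra generators $\tilde x_k$; each $\phi_1(\tilde x_k)$ is forced to be primitive, hence a differentiation, and a differentiation of $\OO(\Aut(\mathfrak h))$ is determined by its values on the $\GG^i_j$ (those on $\bar\GG^i_j$ being forced), so $\langle\tilde x_k,\GG^i_j\rangle=C^i_{kj}$ pins $\phi_1$ down completely. For basis independence the clean argument is conceptual: $D_x$ is intrinsically the tangent vector $\ad x\in\mathrm{Der}(\mathfrak h)=\mathrm{Lie}(\Aut(\mathfrak h))$ at the identity, and $\langle\tilde x,-\rangle$ is the associated derivative at $1$ extended multiplicatively, neither of which mentions a basis; alternatively one checks directly that the values $C^i_{kj}$ transform consistently under the change-of-basis rule~(\ref{eq:G'TGTm}). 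The one genuinely load-bearing verification is the descent to $\OO(\Aut(\mathfrak h))$ above, where the Leibniz structure must be matched against the defining relations~(\ref{eq:CUU}); the homomorphism property is then the same identity used again.
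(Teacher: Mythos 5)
Your proposal is correct, and its skeleton is the same as the paper's: realize the prescribed values as differentiations $D_x$ attached to $\ad x$, present $U(\mathfrak{h}_{Lie})$ as $T(\mathfrak{h}^l)/I^l$ via Lemma~\ref{prop:uhlie}, extend by the universal property of Lemma~\ref{lem:freebialgvar}, and conclude through the equivalence between Hopf pairings and bialgebra maps into $\OO(\Aut(\mathfrak{h}))^\circ$, with the left Leibniz identity doing the work twice. Two pieces of scaffolding genuinely differ, though. First, the paper's existence proof starts from the free commutative bialgebra $B_n$ on all $2n^2$ symbols, prescribes values on both $\GG^i_j$ and $\bar\GG^i_j$, and checks vanishing on every defining relation of $\OO(\Aut(\mathfrak{h}))$; you start from $\OO(\mathrm{GL}(n,\genfd))$, prescribe values only on the $U^i_j$, and let $D_k(\bar U^i_j)=-C^i_{kj}$ be forced by differentiating $\sum_m U^i_m\bar U^m_j=\delta^i_j$, so that only the relations~(\ref{eq:CUU}) need a computation; the price is that Lemma~\ref{lem:bialgdiff} no longer applies verbatim, and you must invoke the standard (but nowhere stated in the paper) unique extension of derivations to a localization, which works because $\epsilon(\det)=1$. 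Second, where the paper pairs the generators $e_{x,y}$ of $I^l$ against the generators of $\OO(\Aut(\mathfrak{h}))$ and then propagates the vanishing to all of $I^l\otimes\OO(\Aut(\mathfrak{h}))$ via Lemma~\ref{lem:coidealK}(ii), you instead establish $D_{[x,y]}=D_x * D_y - D_y * D_x$ as an identity in the convolution algebra $\OO(\Aut(\mathfrak{h}))^\circ$, after which killing $I^l$ is automatic for an algebra map; this bypasses Lemma~\ref{lem:coidealK} entirely and is somewhat more conceptual ($\ad$ is a Leibniz-algebra map into a Lie algebra, hence factors through $\mathfrak{h}_{Lie}$). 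Do note that the step ``testing on the algebra generators $\GG^p_q$ shows'' the convolution identity tacitly uses that both sides are primitive, hence differentiations, hence determined by their values on the $\GG^i_j$ alone (the $\bar\GG^i_j$-values being forced) --- you state this forcing only in the uniqueness paragraph, and it should be cited at that point as well. Your uniqueness and basis-independence arguments are condensed but equivalent versions of the paper's.
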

\begin{proof}
Notice that $C^i_{k j} = (\ad{x_k})^i_j$, the $(i,j)$-th matrix element  of $\ad{x_k}$, hence
$\langle \tilde{x},\GG^i_j\rangle = (\ad x)^i_{j}$ for all $\tilde{x}\in\mathfrak{h}$. 

\emph{Uniqueness.} If such a pairing exists, then, $\forall x\in\mathfrak{h}$, $0 = \epsilon_{U(\mathfrak{h}_{{Lie}})}(\tilde{x}) = \langle \tilde{x}, 1\rangle$, hence 
\begin{align*}
0 
&= \langle\tilde{x},\delta^i_j 1\rangle = \langle\tilde{x},\sum_m \mathcal{G}^i_m \bar{\mathcal{G}}^m_j\rangle = \langle\Delta_{U(\mathfrak{h}_{\mathrm{Lie}})}(\tilde{x}),\sum_m\mathcal{G}^i_m\otimes\bar{\mathcal{G}}^m_j\rangle \\
& =\sum_m\langle\tilde{x},\mathcal{G}^i_m\rangle\langle 1,\bar{\mathcal{G}}^m_j\rangle
+ \langle 1,\mathcal{G}^i_m\rangle\langle\tilde{x},\bar{\mathcal{G}}^m_j\rangle
= \sum_m (\ad\tilde{x})^i_m \epsilon(\bar{\mathcal{G}}^m_j) +
\epsilon(\mathcal{G}^i_m)\langle\tilde{x},\bar{\mathcal{G}}^i_m\rangle.
\end{align*} 
Thus,
$\langle\tilde{x},\bar{\mathcal{G}}^i_j\rangle= -\sum_m (\ad{\tilde{x}})^i_m\epsilon(\bar{\mathcal{G}}^m_j) = -(\ad{\tilde{x}})^i_j$ and, in particular, $\langle\tilde{x}_k,\bar{\mathcal{G}}^i_j\rangle = - C^i_{kj}$.

Denote $\OO^{1+} := \mathrm{Span}_\genfd\{\mathcal{G}^i_j\}_{i,j = 1}^n\subset \OO(\Aut(\mathfrak{h}))$. By~(\ref{eq:OAutcopr}),
$\Delta(\OO^{1+}) \in\OO^{1+}\otimes\OO^{1+}$, hence $\langle \tilde{x}_{k_1} \cdots \tilde{x}_{k_m} , \GG^i_j \rangle = \langle \tilde{x}_{k_1} \otimes \cdots \otimes \tilde{x}_{k_m} , \Delta^{m-1}(\GG^i_j) \rangle$ is a polynomial in expressions of the form $\langle \tilde{x}_{k},\GG^r_s\rangle$. Similarly, $\langle \tilde{x}_{k_1} \cdots \tilde{x}_{k_m} , \bar{\GG}^i_j \rangle$ are determined by $\langle \tilde{x}_{k},\bar\GG^r_s\rangle$.

For any $v\in U(\mathfrak{h}_{Lie})$, $\langle v,\tilde\GG^{i_1}_{j_1}\cdots \tilde\GG^{i_m}_{j_m}  \rangle = \langle  \Delta^{m-1}(v), \tilde\GG^{i_1}_{j_1}\otimes \cdots \otimes \tilde\GG^{i_m}_{j_m} \rangle$,
where each $\tilde\GG^{i_p}_{j_p}$ stands for either $\GG^{i_p}_{j_p}$ or $\bar\GG^{i_p}_{j_p}$. After expanding $\Delta^{m-1}(v)$, the right-hand side is written in terms of expressions of the form $\langle \tilde{x}_{k_1} \cdots \tilde{x}_{k_m} , \tilde{\GG}^i_j \rangle$. Therefore, if such a pairing exists, it is unique. 
	
	\emph{Existence.} We first consider the free commutative algebra $B_n$ on the $2n^2$ generators, still denoted $\mathcal{G}^i_j,\bar{\mathcal{G}}^i_j$, and with the same rule~(\ref{eq:OAutcopr}) for a bialgebra structure (this is the bialgebra of regular functions on the variety of pairs of arbitrary $n\times n$ matrices). A unique pairing of $\mathfrak{h}^l$ with $B$ is by Lemma~\ref{lem:bialgdiff} extending $\langle l_{x_k}, \GG^i_j \rangle = C^i_{kj}$, $\langle l_{x_k}, \bar\GG^i_j \rangle = - C^i_{kj}$ by Leibniz rule, requiring that $\langle l_{x_k}, -\rangle$ is a differentiation of $B_n$.
We now want to show that there is an induced pairing between $\mathfrak{h}^l$ and the quotient Hopf algebra $B_n/I = \mathcal{O}(\Aut(\mathfrak{h}))$; functionals $\langle l_{x_k}, -\rangle$ remain differentiations on the quotient. We need to show that the pairing restricted to $\mathfrak{h}^l\otimes I$ vanishes. The biideal of relations $I$ has a generating set $K_I$ of all elements of the form $\sum_{l,m} C_{lm}^k \mathcal{G}_i^l \mathcal{G}_j^m  - \mathcal{G}_r^k C_{ij}^r$, $\sum_k \mathcal{G}^i_k\bar{\mathcal{G}}^k_j -\delta^i_j$ or $\sum_k\bar{\mathcal{G}}^i_k\mathcal{G}^k_j - \delta^i_j$. Observe that $\epsilon(s) = 0$ for all $s\in K_I$. Thus for differentiation $D = \langle l_{x_k},-\rangle$ we obtain $D(b s) = D(b)\epsilon(s) + \epsilon(b)D(s) = 0$ for all $b\in B$ and $s\in K_I$. Therefore if the pairing vanishes on $K_I$ then it vanishes on the ideal generated by $K_I$.
	
Thus we need to check 
	$\langle l_{x_p} , \sum_{r,m} C_{rm}^k {\mathcal{G}}_i^r \mathcal{G}_j^m \rangle =\langle l_{x_p}, \sum_{n}  \mathcal{G}_n^k C_{ij}^n\rangle$
	and 
	$\langle l_{x_p}, \sum_j \mathcal{G}_j^i \bar{\mathcal{G}}_k^j \rangle = \langle l_{x_p}, \delta_k ^i \rangle = \langle l_{x_p} , \sum_j \bar{\mathcal{G}}_j^i \mathcal{G}_k^j \rangle$ for all $i,j,k,p \in \{1,\ldots,n\}$.
	The first equation is
              $$\sum_{r} C^k_{rj} C^r_{pi} + \sum_{m}  C_{im}^k  C^m_{pj}=  \sum_n C^k_{pn}C^n_{ij},$$ which is left Leibniz identity $[[x_p,x_i],x_j] + [x_i,[x_p,x_j]] = [x_p,[x_i,x_j]]$ in terms of the structure constants. By using the differentiation rule, the second equation is simply $\langle l_{x_p}, {\mathcal{G}}_k^i\rangle + \langle l_{x_p}, \bar{\mathcal{G}}_k^i\rangle = 0$, which holds for generators by definition. Therefore, there is a well defined pairing $(\mathfrak{h}^l\oplus\genfd)\otimes\OO(\Aut(\mathfrak{h})) \to \genfd$ such that $\langle 1,f\rangle = \epsilon(f)$ for all $f\in \OO(\Aut(\mathfrak{h}))$ and $\langle l_{x},-\rangle$ is a differentiation of $\OO(\Aut(\mathfrak{h}))$ for all ${x} \in \mathfrak{h}$, that is, 
	\begin{equation} \label{eq:lder}
	\langle l_{x}, fg\rangle = \langle l_{x}, f\rangle \epsilon(g) + \epsilon(f) \langle l_{x}, g\rangle,\quad\forall x\in \mathfrak h,\forall f,g \in \OO(\Aut(\mathfrak{h})).
	\end{equation}
This means that $\mathfrak{h}^l\oplus\genfd$ is equipped with a comultiplication $\Delta$ such that $\Delta(l_x) = 1\otimes l_x+l_x\otimes 1$ and $\epsilon(l_x)=0$ and the pairing respects $\Delta$: in the notation of Subsection~\ref{ss:pairing}, $c\mapsto \langle c,-\rangle$ restricts to a coalgebra map $\phi_1\colon \mathfrak{h}^l\oplus\genfd\to \OO(\Aut(\mathfrak{h}))^\circ$ sending $1$ to $1_{\OO(\Aut(\mathfrak{h}))^\circ}=\epsilon_{\OO(\Aut(\mathfrak{h}))}$. By Lemma~\ref{lem:freebialgvar} and the equivalence between Hopf pairings and bialgebra maps $T(\mathfrak{h}^l)\to \OO(\Aut(\mathfrak{h}))^\circ$, we extend this pairing to a unique Hopf pairing $T(\mathfrak{h}^l) \otimes \OO(\Aut(\mathfrak{h}))\to \genfd$; it is determined by the formula
	$$
        \langle l_{x_{i_1}} \cdots  l_{x_{i_m}} , f \rangle = \langle l_{x_{i_1}} \otimes \cdots \otimes l_{x_{i_m}} , \Delta^{m-1}(f) \rangle.
	$$ 	
	Denote by $I^l$ the ideal in $T(\mathfrak{h}^l)$ generated by $e_{x,y}:= l_{[x,y]} - l_x \otimes l_y + l_y \otimes l_x$, $x,y\in\mathfrak{h}$. By Lemma \ref{prop:uhlie}, $U(\mathfrak{h}_{Lie}) \cong T(\mathfrak{h}^l)/I^l$. Moreover, $\Delta_{T(\mathfrak{h}^l)}(e_{x,y}) = 1\otimes e_{x,y}+e_{x,y}\otimes 1$ and $\epsilon(e_{x,y})=0$, hence $I^l$ is a biideal. Clearly, $U(\mathfrak{h}_{Lie}) \cong T(\mathfrak{h}^l)/I^l$ as Hopf algebras as well.

We now check that the ideal generators $e_{x,y}$ of $I^l$ are paired with every element of $\OO(\Aut(\mathfrak{h}))$ as $0$. Relation $\langle l_{[x_k,x_n]} + l_{x_n} \otimes l_{x_k} , \GG^i_j\rangle = \langle l_{x_k} \otimes l_{x_n}  , \GG^i_j\rangle$ is equivalent to $\sum_m C_{kn}^m C^i_{mj} + \sum_p C^i_{np} C^p_{kj} = \sum_p C^i_{kp}C^p_{nj}, $ which restates the left Leibniz identity $[[x_k,x_n],x_j] + [x_n,[x_k,x_j]] = [x_k,[x_n,x_j]] $. Similarly, $\langle l_{[x_k,x_n]} + l_{x_n} \otimes l_{x_k} , \bar\GG^i_j\rangle = \langle l_{x_k} \otimes l_{x_n}, \bar\GG^i_j\rangle$ computes to the same identity. Since $I$ is a biideal and $\OO^1 := \mathrm{Span}_\genfd\{\mathcal{G}^i_j,\bar{\mathcal{G}}^i_j\}_{i,j=1}^n$ satisfies $\Delta_{\OO(\Aut(\mathfrak{h}))}(\OO^1)\subset\OO^1\otimes\OO^1$, we can apply Lemma~\ref{lem:coidealK}, part (ii), for $K_H = \OO^1$ to conclude that the pairing vanishes on the entire $I\otimes\OO(\Aut(\mathfrak{h}))$. Therefore, there is a well defined Hopf pairing $U(\mathfrak{h}_{Lie}) \otimes \OO(\Aut(\mathfrak{h}))\to\genfd$ satisfying $\langle \tilde{x}_k, \GG^i_j \rangle = C^i_{kj}$.

To show that the pairing does not depend on the choice of basis $\mathbf{b}$, note that we started from a pairing $\langle -,-\rangle_{\mathbf{b}}$ defined on $\mathfrak{h}^l\otimes\mathrm{Span}_\genfd\{\GG_{\mathbf{b}j}^i,\bar\GG_{\mathbf{b}j}^i\}_{i,j=1}^n\subset\mathfrak{h}^l\otimes B_n$ by $\langle l_x,\GG^i_{\mathbf{b}j}\rangle_{\mathbf{b}} = \ad(x)_{\mathbf{b}j}^i$ and $\langle l_x,\bar\GG^i_{\mathbf{b}j}\rangle_{\mathbf{b}}=-\ad(x)^i_{\mathbf{b}j}$. For a base change by a numerical matrix $T$, $\langle l_x,\GG^i_{\mathbf{b}'j}\rangle_{\mathbf{b}'} =  (\ad(x)_{\mathbf{b}'})^i_j =\sum_{m,l} T^i_m (\ad(x)_{\mathbf{b}})_l^m {T^{-1}}^l_j = \sum_{m,l} T^i_m \langle l_x,\GG_{\mathbf{b}l}^m\rangle_{\mathbf{b}}{T^{-1}}^l_j = \langle l_x,(T\mathcal{G}_{\mathbf{b}}T^{-1})^i_j\rangle_{\mathbf{b}} = \langle l_x, \theta_{\mathbf{b}\mathbf{b}'}(\mathcal{G}_{\mathbf{b'}j}^i)\rangle_{\mathbf{b}}$. Likewise for $\bar\GG$. 
Induced pairing $U(\mathfrak{h}_{Lie})\otimes\OO(\Aut(\mathfrak{h}))\to \genfd$ is uniquely defined by the pairing on $\mathfrak{h}^l\otimes\mathrm{Span}_\genfd\{\GG_{\mathbf{b}j}^i,\bar\GG_{\mathbf{b}j}^i\}_{i,j=1}^n$ and by abstract properties of the extension. Thus, it respects bialgebra isomorphism $\theta_{\mathbf{b}\mathbf{b}'}$ in the second argument. Once we quotient from $B_n$ down to $\OO(\Aut(\mathfrak{h}))$, $\theta_{\mathbf{b}\mathbf{b}'}$ becomes an identification $\iota_{\mathbf{b}}^{-1*}\circ\iota_{\mathbf{b}'}^*$ (extending (\ref{eq:G'TGTm})), yielding the invariance. 
\end{proof}

\begin{proposition}\label{prop:hpairr}
	Let $\mathfrak{h}$ be a right Leibniz $\genfd$-algebra and $\mathbf{b} = (y_1, \ldots, y_n)$ a $\genfd$-basis of~$\mathfrak{h}$. Denote by $C^i_{jk}$ structure constants determined from $[y_j,y_k] = C^i_{jk} y_i$, for $j,k\in\{1, \ldots, n\}$. Let $\mathcal{G}^i_j, \bar{\mathcal{G}}^i_j, \ i,j\in\{1,\ldots,n\}$ be the generators of the algebra $\OO(\Aut(\mathfrak{h}))$ from Subsection~\ref{ss:Oaut}. Denote by $\tilde{y}$ the image of $y\in \mathfrak{h}$ in $\mathfrak{h}_{Lie}$.

Then there is a well defined and unique Hopf pairing 
	$$\langle-,-\rangle\colon U(\mathfrak{h}_{Lie}) \otimes \OO(\Aut(\mathfrak{h}))\to\genfd$$ such that $\langle \tilde{y}_k, \GG^i_j \rangle = - C^i_{jk}$ for all $i,j,k\in\{1,\ldots,n\}$. 
This Hopf pairing does not depend on the choice of basis.
\end{proposition}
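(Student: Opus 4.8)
The plan is to deduce this from the already-established left Leibniz case, Proposition~\ref{prop:hpair}, by passing to the opposite algebra. Given the right Leibniz algebra $(\mathfrak{h},[-,-])$, I would define a new bracket $[a,b]^{\op}:=[b,a]$ on the same underlying vector space. A direct check against the definitions shows that $(\mathfrak{h},[-,-]^{\op})$ is a \emph{left} Leibniz algebra: reversing every bracket turns the right Leibniz identity $[[x,y],z]=[[x,z],y]+[x,[y,z]]$ into the left Leibniz identity verbatim. Denote this left Leibniz algebra by $\mathfrak{h}^{\op}$. In the basis $\mathbf{b}=(y_1,\ldots,y_n)$ its structure constants are $\tilde{C}^i_{jk}=C^i_{kj}$, since $[y_j,y_k]^{\op}=[y_k,y_j]=\sum_i C^i_{kj}y_i$.

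The next step is to line up the two sides of the pairing. Because the automorphism condition $\psi([a,b])=[\psi a,\psi b]$ is symmetric under swapping $a$ and $b$, the closed subgroups $\Aut(\mathfrak{h})$ and $\Aut(\mathfrak{h}^{\op})$ of $\mathrm{GL}(\mathfrak{h})$ coincide; by Proposition~\ref{prop:OAut(L)} this gives a canonical Hopf algebra identification $\OO(\Aut(\mathfrak{h}))=\OO(\Aut(\mathfrak{h}^{\op}))$ respecting the generators $\mathcal{G}^i_j,\bar{\mathcal{G}}^i_j$ attached to $\mathbf{b}$. On the enveloping side, the ideal generated by the squares $[x,x]=[x,x]^{\op}$ is the same, so $\mathfrak{h}_{Lie}$ and $(\mathfrak{h}^{\op})_{Lie}$ share the same underlying space with brackets differing by a sign; hence $\tilde{y}\mapsto-\tilde{y}$ is a Lie algebra isomorphism $(\mathfrak{h}^{\op})_{Lie}\to\mathfrak{h}_{Lie}$, which lifts to a Hopf algebra isomorphism $\Phi\colon U((\mathfrak{h}^{\op})_{Lie})\to U(\mathfrak{h}_{Lie})$ acting as $\tilde{y}\mapsto-\tilde{y}$ on primitives.

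With these identifications in hand, I would invoke Proposition~\ref{prop:hpair} for the left Leibniz algebra $\mathfrak{h}^{\op}$ to obtain a Hopf pairing $\langle-,-\rangle'\colon U((\mathfrak{h}^{\op})_{Lie})\otimes\OO(\Aut(\mathfrak{h}))\to\genfd$ with $\langle\tilde{y}_k,\mathcal{G}^i_j\rangle'=\tilde{C}^i_{kj}=C^i_{jk}$, and then define the desired pairing by $\langle u,f\rangle:=\langle\Phi^{-1}(u),f\rangle'$. Precomposing one leg of a Hopf pairing with a Hopf algebra isomorphism again produces a Hopf pairing, and since $\Phi^{-1}(\tilde{y}_k)=-\tilde{y}_k$ we get $\langle\tilde{y}_k,\mathcal{G}^i_j\rangle=-C^i_{jk}$, exactly as required: the two sign reversals (from the opposite bracket and from the isomorphism of Lie quotients) are precisely what conspire into the single minus sign in the statement. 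Uniqueness and independence of the basis then transfer through $\Phi$ and the identification $\OO(\Aut(\mathfrak{h}))=\OO(\Aut(\mathfrak{h}^{\op}))$ from their counterparts in Proposition~\ref{prop:hpair}.

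Alternatively, one can argue directly, mirroring the proof of Proposition~\ref{prop:hpair}: set up the auxiliary differentiation pairing of $\mathfrak{h}^r$ with the free commutative bialgebra $B_n$ via Lemma~\ref{lem:bialgdiff} with $\langle r_{y_k},\mathcal{G}^i_j\rangle=-C^i_{jk}$ and $\langle r_{y_k},\bar{\mathcal{G}}^i_j\rangle=C^i_{jk}$, descend along the Hopf ideal cutting out $\OO(\Aut(\mathfrak{h}))$, extend to $T(\mathfrak{h}^r)$ by Lemma~\ref{lem:freebialgvar}, and finally kill the ideal $I^r$ of Lemma~\ref{prop:uhlier} using Lemma~\ref{lem:coidealK}(ii). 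On this route every relation check collapses, after applying the differentiation rule together with $\Delta(\mathcal{G}^i_j)=\sum_m\mathcal{G}^i_m\otimes\mathcal{G}^m_j$, to the \emph{right} Leibniz identity $\sum_r C^r_{jk}C^i_{rn}=\sum_l C^l_{jn}C^i_{lk}+\sum_m C^m_{kn}C^i_{jm}$, and the sign $-C^i_{jk}$ is exactly what makes the cancellations match the right identity rather than the left one. In either approach the only genuinely delicate point is this bookkeeping of signs and index orders, and that is where I expect the main, though purely combinatorial, obstacle to lie.
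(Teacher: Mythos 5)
Your proposal is correct, and your primary route is genuinely different from the paper's. The paper proves the right Leibniz case by repeating, mutatis mutandis, the construction of Proposition~\ref{prop:hpair}: present $U(\mathfrak{h}_{Lie})$ as $T(\mathfrak{h}^r)/I^r$ (Lemma~\ref{prop:uhlier}), pair the generators $r_{y_k}$ with the free commutative bialgebra via differentiations with $\langle r_{y_k},\GG^i_j\rangle=-C^i_{jk}$, descend and extend, with every vanishing check collapsing to the right Leibniz identity --- this is exactly your ``alternative'' paragraph, so that part coincides with the paper. Your main argument instead reduces the right case to the already-proved left case, and all its steps are sound: the opposite bracket $[a,b]^{\op}=[b,a]$ of a right Leibniz algebra is left Leibniz; $\Aut(\mathfrak{h})=\Aut(\mathfrak{h}^{\op})$ inside $\mathrm{GL}(\mathfrak{h})$, and at the level of the presentations of Subsection~\ref{ss:Oaut} the defining ideals literally coincide (the relations~(\ref{eq:CUU}) written for $\tilde{C}^i_{jk}=C^i_{kj}$ are carried, using commutativity of $\OO(\mathrm{GL}(n,\genfd))$ and relabeling the summation indices, into the relations for $C$ with the free indices $i,j$ swapped), so the identification respects the generators $\GG^i_j,\bar{\GG}^i_j$; the two Lie quotients share the same underlying space with opposite brackets, so $\tilde{y}\mapsto-\tilde{y}$ is an isomorphism lifting to a Hopf algebra isomorphism $\Phi$; and precomposing one leg of a Hopf pairing with a Hopf algebra isomorphism again gives a Hopf pairing, with the two sign reversals composing to $\langle\tilde{y}_k,\GG^i_j\rangle=-C^i_{jk}$. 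Uniqueness and basis-independence indeed transfer through $\Phi$ and the identification of function algebras, since both are defined without reference to a basis. What your reduction buys is economy and a conceptual explanation of the minus sign (opposite bracket composed with the antipodal isomorphism of the Lie quotient), with no structure-constant computation redone; what the paper's direct route buys is explicitness --- it records precisely which quadratic identities in the $C^i_{jk}$ are being used, which is the bookkeeping that reappears in the proof of Theorem~\ref{prop:Oautr}.
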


\begin{proof} Notice that $\langle \tilde{y}_k, \GG^i_j\rangle = -(\ad_r y_k)^i_j$ where $\ad_r y\colon z\mapsto [z,y]$ is the {\em right} adjoint action; thus the main difference from Proposition~\ref{prop:hpair} is change of side.

The entire proof is analogous to the proof of Proposition~\ref{prop:hpair}, hence we skip it. One first observes that, if the pairing exists, $\langle \tilde{y}_k, \bar\GG^i_j \rangle_{\mathbf{b}} = C^i_{\mathbf{b}jk}$ must hold. We are presenting $U(\mathfrak{h}_{Lie})$ as $T(\mathfrak{h}^r)/I^r$ from Lemma~\ref{prop:uhlier}. Key calculations with elements $l_{x}, x\in \mathfrak{h},$ which in Proposition~\ref{prop:hpair} boil down to the left Leibniz identity are now replaced by calculations with elements $r_y, y\in \mathfrak{h},$ (from Lemma~\ref{prop:uhlier})  and boil down to the right Leibniz identity. For example, $\langle r_{y_p} , \sum_{s,m} C_{sm}^k {\mathcal{G}}_i^s \mathcal{G}_j^m \rangle =\langle r_{y_p}, \sum_{n}  \mathcal{G}_n^k C_{ij}^n\rangle$ is $$\sum_{s} C^k_{sj} C^s_{ip} + \sum_{m}  C_{im}^k  C^m_{jp}=  \sum_n C^k_{np}C^n_{ij},$$ which is right Leibniz identity $[[y_i,y_p],y_j] + [y_i,[y_j,y_p]] = [[y_i,y_j], y_p]$. \end{proof}

\begin{remark}(Geometric origin of the pairing.) If $\genfd$ is $\mathbb{R}$ or $\mathbb{C}$ and $\mathfrak{h}$ is a Lie algebra $\mathfrak{g}$ over $\genfd$, then $\Aut(\ggf)$ is a linear Lie group and its Lie algebra is $\operatorname{Der}(\ggf)$.
Differential $df_\id$ of function $f \in \OO(\Aut(\ggf))$ at the unit $\id$ of $\Aut(\ggf)$ is a linear functional on $T_\id(\Aut(\ggf)) \cong \operatorname{Der}(\ggf)$, and therefore  $df_\id \in \operatorname{Der}(\ggf)^*$. Let $\ad X \colon \ggf \to \ggf$, $\ad X \colon Z \mapsto [X,Z]$. Then $\ad X \in \operatorname{Der}(\ggf)$. % for $X \in \ggf$. 
	
	We prove that the pairing $\Ug \otimes \OO(\Aut(\ggf)) \to \genfd$ from Proposition~\ref{prop:hpair},
in the case when $\genfd$ is $\mathbb{R}$ or $\mathbb{C}$ and $\mathfrak{h}$ is a Lie algebra $\mathfrak{g}$, agrees on subset $\ggf\otimes\OO(\Aut(\ggf))$ of its domain with the pairing $\langle -,- \rangle'\colon\ggf \otimes \OO(\Aut(\ggf))\to\genfd$ defined by 
	$$\langle X, f \rangle' = df_\id(\ad X ), \quad \text{ for } X \in \ggf \text{ and } f \in \OO(\Aut(\ggf)).$$  
	First we check that indeed $d(\mathcal{G}^i_j)_\id (\ad X_k) = C^i_{kj}.$ The exponential map $\operatorname{exp}$ maps a neighborhood of $0$ in $\operatorname{Der}(\ggf)$ to a neighborhood of $\id$ in $\Aut(\ggf)$.	We have that
	\begin{align*}
	d(\mathcal{G}^i_j)_\id (\ad X_k) & = (\ad X_k) (\mathcal{G}^i_j )(\id) = \lim_{t\to 0}\frac{d}{dt}\, \mathcal{G}^i_j(\exp (t \ad X_k) ) \\ 
	&= \lim_{t\to 0}\frac{d}{dt}\, \mathcal{G}^i_j\left(\sum_{r=0}^\infty \frac{(t \ad X_k)^r}{r!} \right) 
	= \lim_{t\to 0}\frac{d}{dt}\! \left(\sum_{r=0}^\infty \frac{(t \ad X_k)^r}{r!} \right)^i_j \\
	&= \lim_{t\to 0}\left(\sum_{r=1}^\infty \frac{t^{r-1} (\ad X_k)^r}{(r-1)!} \right)^i_j 
	=  (\ad X_k)^i_j = C^i_{kj}.
	\end{align*}
	Similarly, one checks that $d(\bar{\mathcal{G}}^i_j)_\id(\ad X_k) = - C^i_{kj}$, by using that $\exp(t \ad X_k)^{-1} = \exp(t \ad (-X_k))$. By linearity, we conclude that the pairings agree for all $X\in \ggf$ and generators $\mathcal{G}^i_j, \bar{\mathcal{G}}^i_j, i,j\in\{ 1, \ldots, n\}$. Since the pairing also has the property 
	$$\langle X, fg\rangle' = \langle X\otimes 1 + 1\otimes X, f\otimes g \rangle', \text{ for } X\in \ggf \text{ and } f,g\in \OO(\Aut(\ggf)),$$ we conclude that they agree for all $X \in \ggf$ and $f \in \OO(\Aut(\ggf))$.
\end{remark}

\subsection{Main theorem}

\begin{theorem}  \label{prop:Oaut} Let $\mathfrak{h}$ be a left Leibniz $\genfd$-algebra with vector space basis $\mathbf{b}= (x_1, \ldots, x_n)$ and structure constants $C_{ij}^k$
determined from $[x_i, x_j] = \sum_k C_{ij}^k x_k$, $i,j \in \{1,\ldots,n\}.$ Let $\mathcal{G}^i_j, \bar{\mathcal{G}}^i_j, \ i,j\in\{1,\ldots,n\}$ be the generators of the algebra $\OO(\Aut(\mathfrak{h}))$ from Subsection~\ref{ss:Oaut}. Denote by $\tilde{x}$ the image of $x\in \mathfrak{h}$ in $\mathfrak{h}_{Lie}$. Then the following holds.
\begin{enumerate}
\item[(i)] Hopf pairing $U(\mathfrak{h}_{Lie}) \otimes \OO(\Aut(\mathfrak{h})) \to \genfd$ from Proposition~\ref{prop:hpair} induces a right Hopf action $\btl \colon U(\mathfrak{h}_{Lie}) \otimes \OO(\Aut(\mathfrak{h})) \to U(\mathfrak{h}_{Lie})$ by  formula
\begin{equation}\label{eq:ract}
\tilde x \btl f := \langle {\tilde x}_{(1)} , f \rangle {\tilde x}_{(2)}, \quad \text{ for } \tilde x \in U(\mathfrak{h}_{Lie}) \text{ and } f \in \OO(\Aut(\mathfrak{h})),
\end{equation}
	which further induces the structure of a smash product algebra $\OO(\Aut(\mathfrak{h})) \sharp U(\mathfrak{h}_{Lie})$. This action and the smash product do not depend on the choice of basis $\mathbf{b}$.
\item[(ii)] There is a unique $\genfd$-linear unital antimultiplicative map 
$$\lambda \colon U(\mathfrak{h}_{Lie}) \to \OO(\Aut(\mathfrak{h})) \sharp U(\mathfrak{h}_{Lie})$$ such that
\begin{equation}\label{eq:lambda}
\lambda (\tilde{x}_j) = \sum_i \bar{\mathcal{G}}_j^i \sharp \tilde{x}_i, \quad \text{ for }  j \in \{1,\ldots,n\}.
\end{equation}
Map $\lambda$ does not depend on the choice of basis $\mathbf{b}$.
\item[(iii)] Elements of $\operatorname{Im}\lambda $ commute with elements of $1\sharp U(\mathfrak{h}_{Lie})$ in $\OO(\Aut(\mathfrak{h}))\sharp U(\mathfrak{h}_{Lie})$.
\item[(iv)] Map $\lambda$ is a left $\OO(\Aut(\mathfrak{h}))$-coaction on $U(\mathfrak{h}_{Lie})$. 
\item[(v)] $(U(\mathfrak{h}_{Lie}),\btl,\lambda)$ is a braided commutative right-left Yetter--Drinfeld module algebra over  $\OO(\Aut(\mathfrak{h}))$.
\end{enumerate}
\end{theorem}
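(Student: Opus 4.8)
The plan is to reduce all five statements to the algebra generators $\tilde x_j$ of $U(\mathfrak{h}_{Lie})$ and $\GG^i_j,\bar\GG^i_j$ of $\OO(\Aut(\mathfrak{h}))$, where the defining relations of Proposition~\ref{prop:OAut(L)} and the Leibniz identity built into Proposition~\ref{prop:hpair} carry the computation. Part (i) is the standard passage from a Hopf pairing to a module algebra: formula~\eqref{eq:ract} defines a right action because the Hopf conditions $\langle \tilde x,fg\rangle=\langle\tilde x_{(1)},f\rangle\langle\tilde x_{(2)},g\rangle$ and $\langle\tilde x,1\rangle=\epsilon(\tilde x)$ give $(\tilde x\btl f)\btl g=\tilde x\btl(fg)$ and $\tilde x\btl 1=\tilde x$, while the symmetric Hopf conditions give the measuring identities $(\tilde x\tilde y)\btl f=\sum(\tilde x\btl f_{(1)})(\tilde y\btl f_{(2)})$ and $1\btl f=\epsilon(f)1$; the smash product is then formed as in Subsection~\ref{ss:YD}, and basis independence is inherited from that of the pairing in Proposition~\ref{prop:hpair}. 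For (ii) I would use the universal property of the tensor algebra: any $\genfd$-linear map $\mathfrak{h}^l\to\OO(\Aut(\mathfrak{h}))\sharp U(\mathfrak{h}_{Lie})$ extends uniquely to an antimultiplicative map on $T(\mathfrak{h}^l)$, and by Lemma~\ref{prop:uhlie} this descends to $U(\mathfrak{h}_{Lie})=T(\mathfrak{h}^l)/I^l$ as soon as it annihilates the generators $e_{x,y}=l_{[x,y]}-l_x\otimes l_y+l_y\otimes l_x$; antimultiplicativity reduces this to checking $\lambda(e_{x_p,x_q})=0$, which upon expanding the smash-product multiplication becomes relation~\eqref{eq:CbarGbarG} together with the Leibniz identity. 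Basis independence follows from the transformation rule~\eqref{eq:G'TGTm}.

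For (iii), note that $\operatorname{Im}\lambda$ is a subalgebra (as the image of an antimultiplicative map) and $1\sharp U(\mathfrak{h}_{Lie})$ is generated by the $1\sharp\tilde x_k$, so it suffices to check that $\lambda(\tilde x_j)$ commutes with each $1\sharp\tilde x_k$. Evaluating both products in $\OO(\Aut(\mathfrak{h}))\sharp U(\mathfrak{h}_{Lie})$ using $\Delta(\bar\GG^i_j)=\sum_m\bar\GG^m_j\otimes\bar\GG^i_m$ and $\tilde x_k\btl\bar\GG^i_m=\delta^i_m\tilde x_k-C^i_{km}1$, the difference of the two products collapses to $\sum_m\bar\GG^m_j\sharp\,\widetilde{([x_m,x_k]+[x_k,x_m])}$. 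The decisive point is that $\widetilde{[x_m,x_k]+[x_k,x_m]}=0$ in $\mathfrak{h}_{Lie}$: this is exactly the antisymmetrization that is killed in passing from the Leibniz algebra to its maximal Lie quotient, so the commutator vanishes. This is the one place where the Leibniz-to-Lie quotient is genuinely used.

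Part (iv) is the crux. On the generators $\tilde x_j$ the comodule axioms are immediate: coassociativity $(\Delta\otimes\id)\lambda(\tilde x_j)=(\id\otimes\lambda)\lambda(\tilde x_j)$ is precisely $\Delta(\bar\GG^i_j)=\sum_m\bar\GG^m_j\otimes\bar\GG^i_m$, and counitality is $\epsilon(\bar\GG^i_j)=\delta^i_j$; likewise the Yetter--Drinfeld compatibility~\eqref{eq:YDsharp} for $\tilde x_j$ against a generator $\GG^a_b$ reduces, after expanding the smash product, to relation~\eqref{eq:CGbarG} (and the analogous computation against $\bar\GG^a_b$). The main obstacle is propagating these identities from generators to all of $U(\mathfrak{h}_{Lie})$ and $\OO(\Aut(\mathfrak{h}))$. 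This is not a one-line induction, because $\lambda$ is only antimultiplicative into the smash product and neither $\id\otimes\lambda$ nor $\epsilon\otimes\id$ is an algebra map there. The mechanism that makes the extension work is part (iii): since $\operatorname{Im}\lambda$ commutes with $1\sharp U(\mathfrak{h}_{Lie})$, products of $\lambda$-values inside the smash product may be rearranged into tensor-product form, which is exactly what reconciles the antimultiplicative extension with coassociativity and allows the standard ``verify on generators, then propagate along products and coproducts'' arguments for module and comodule structures to close the induction.

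Finally, granting (i)--(iv), part (v) is pure assembly through the lemmas of Subsection~\ref{ss:YD}. Part (iii) together with Lemma~\ref{lem:}(ii) yields braided commutativity of the multiplication, and (iv) provides the Yetter--Drinfeld module structure; then Lemma~\ref{lem:}(iii)---whose hypotheses are exactly braided commutativity and the antimultiplicativity of $\lambda$ established in (ii)---promotes $U(\mathfrak{h}_{Lie})$ to an $\OO(\Aut(\mathfrak{h}))^{\op}$-comodule algebra. Combined with the module algebra structure from (i), this says precisely that $(U(\mathfrak{h}_{Lie}),\btl,\lambda)$ is a monoid in ${}^{\OO(\Aut(\mathfrak{h}))}\mathcal{YD}_{\OO(\Aut(\mathfrak{h}))}$, i.e. a braided commutative Yetter--Drinfeld $\OO(\Aut(\mathfrak{h}))$-module algebra, as claimed.
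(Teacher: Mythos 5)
Your proposal is correct, and in parts (i)--(iii) it is essentially the paper's own proof: (i) is the standard pairing-to-action passage with basis independence inherited from Proposition~\ref{prop:hpair}; (ii) is the same antimultiplicative extension to $T(\mathfrak{h}^l)$ with descent through $I^l$ checked on the generators $e_{x,y}$ using \eqref{eq:CbarGbarG}; (iii) is the same generator computation (your collapse to $\sum_m\bar{\mathcal G}^m_j\sharp\,\widetilde{[x_m,x_k]+[x_k,x_m]}$ agrees, up to sign, with the paper's cancellation via $\tilde x_k\tilde x_m-\tilde x_m\tilde x_k=\widetilde{[x_k,x_m]}$ in $U(\mathfrak{h}_{Lie})$). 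Where you genuinely diverge is in the assembly of (v). The paper proves the Yetter--Drinfeld compatibility, the $\OO(\Aut(\mathfrak{h}))^{\op}$-comodule algebra property, and braided commutativity ($z\btl\lambda(v)=vz$) each by explicit generator-plus-induction arguments, never invoking Lemma~\ref{lem:}; you instead verify only the YD compatibility directly (on generators via \eqref{eq:CGbarG}, then propagated) and then obtain braided commutativity from part (iii) via Lemma~\ref{lem:}(ii), and the comodule-algebra property from antimultiplicativity via Lemma~\ref{lem:}(iii). This is a cleaner, more structural route that shows parts (ii) and (iii) of the theorem doing double duty, at the cost of resting on Lemma~\ref{lem:}(ii),(iii), whose proofs the paper leaves to the reader. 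Two points should be tightened. First, Lemma~\ref{lem:} carries the standing hypothesis that $(A,\btl,\lambda)$ is already a Yetter--Drinfeld module, so the YD compatibility must be fully established before either part of the lemma is invoked; your order of deductions respects this, but it should be said explicitly to preclude any appearance of circularity. Second, your attribution of all the propagation in (iv) to part (iii) is only half right: coassociativity and counitality indeed propagate via the rearrangement $\lambda(zv)=\sum v_{[-1]}z_{[-1]}\sharp z_{[0]}v_{[0]}$, which needs (iii) together with antimultiplicativity, but the propagation of the YD compatibility in the first argument runs on antimultiplicativity and the module-algebra axiom alone, and in the second argument on the action axioms alone. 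Finally, two harmless misstatements: the Leibniz-to-Lie quotient is genuinely used already in (ii) (through $[\tilde x_k,\tilde x_m]=\widetilde{[x_k,x_m]}$), not only in (iii); and the descent check in (ii) uses \eqref{eq:CbarGbarG} and that identification rather than the Leibniz identity itself, which enters only through the existence of the pairing in Proposition~\ref{prop:hpair}.
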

\begin{proof} (i) Every Hopf pairing induces a right Hopf action in this way. By Proposition~\ref{prop:hpair}, the pairing, hence also the action, does not depend on the choice of basis. 
	
	(ii) We prove that such $\lambda$ exists. We first define auxiliary map $\tilde\lambda$ as a linear map $\mathfrak{h}^l \to \OO(\Aut(\mathfrak{h})) \sharp U(\mathfrak{h}_{Lie})$ such that $\tilde\lambda (l_{x_j}) = \sum_i \bar{\mathcal{G}}_j^i \sharp \tilde{x}_i$ for $j \in \{1,\ldots,n\}$, then expand it to $\tilde \lambda \colon T(\mathfrak{h}^l) \to \OO(\Aut(\mathfrak{h})) \sharp U(\mathfrak{h}_{Lie})$ by antimultiplicativity and then we check that $\tilde\lambda(I^l) = \{0\}$, where $I^l$ is the ideal in $T(\mathfrak{h}^l)$ generated by $l_{[x,y]} - l_x\otimes l_y + l_y\otimes l_x,$ $x,y \in \mathfrak{h}$. We compute 
	\begin{align*} \tilde\lambda(l_{x_i} \otimes l_{x_j}) &= \tilde\lambda(l_{x_j}) \cdot \tilde\lambda(l_{x_i}) = (\sum_k \bar{\mathcal G}_j^k\sharp \tilde{x}_k) \cdot( \sum_m \bar{\mathcal G}_i^m \sharp \tilde{x}_m) = \sum_{k,m,p} \bar{\mathcal G}_j^k \bar{\mathcal G}^p_i (\tilde{x}_k \btl \bar{\mathcal G}_p^m) \tilde{x}_m \\
	& = \sum_{k,m,p} \bar{\mathcal G}_j^k \bar{\mathcal G}^p_i \sharp (\delta_p^m \tilde{x}_k  - C_{kp}^m) \tilde{x}_m   = \sum_{k,m} \bar{\mathcal G}_j^k \bar{\mathcal G}^m_i \sharp \tilde{x}_k \tilde{x}_m  - \sum_{k,m,p} \bar{\mathcal G}_j^k \bar{\mathcal G}^p_i  C_{kp}^m\sharp \tilde{x}_m \\
	&\stackrel{(\ref{eq:CbarGbarG})}= \sum_{k,m} \bar{\mathcal G}_j^k \bar{\mathcal G}^m_i \sharp \tilde{x}_k \tilde{x}_m  - \sum_{k,m} \bar{\mathcal G}_m^k C_{ji}^m\sharp \tilde{x}_k.
	\end{align*}
	Analogously, $\tilde\lambda(l_{x_j} \otimes l_{x_i})=\sum_{k,m} \bar{\mathcal G}_i^m \bar{\mathcal G}^k_j \sharp \tilde{x}_m \tilde{x}_k  - \sum_{k,m} \bar{\mathcal G}_k^m C_{ij}^k\sharp \tilde{x}_m$.
After subtracting,
	\begin{align*}
	\tilde\lambda(l_{x_i} \otimes l_{x_j} - l_{x_j} \otimes l_{x_i}) &= \sum_{k,m} \bar{\mathcal G}_i^m \bar{\mathcal G}^k_j \sharp [\tilde{x}_k , \tilde{x}_m] - \sum_{k,m} \bar{\mathcal G}_m^k C_{ji}^m\sharp \tilde{x}_k  +  \sum_{k,m} \bar{\mathcal G}_k^m C_{ij}^k\sharp \tilde{x}_m \\
	 &= \sum_{k,m} \bar{\mathcal G}_i^m \bar{\mathcal G}^k_j \sharp \widetilde{[x_k , x_m]} - \sum_{k,m} \bar{\mathcal G}_m^k C_{ji}^m\sharp \tilde{x}_k  +  \sum_{k,m} \bar{\mathcal G}_k^m C_{ij}^k\sharp \tilde{x}_m \\
	 &= \sum_{k,m} \bar{\mathcal G}_i^m \bar{\mathcal G}^k_j C_{km}^p \sharp \tilde{x}_p - \sum_{k,m} \bar{\mathcal G}_m^k C_{ji}^m\sharp \tilde{x}_k  +  \sum_{k,m} \bar{\mathcal G}_k^m C_{ij}^k\sharp \tilde{x}_m\\
	 &= \sum_{m} C_{ji}^m \bar{\mathcal G}_m^p \sharp \tilde{x}_p - \sum_{k,m} \bar{\mathcal G}_m^k C_{ji}^m\sharp \tilde{x}_k  +  \sum_{k,m} \bar{\mathcal G}_k^mC_{ij}^k\sharp \tilde{x}_m \\
	 & = \sum_{k,m} \bar{\mathcal G}_k^m C_{ij}^k\sharp \tilde{x}_m.
	\end{align*}
	On the other hand, 
	$\tilde\lambda(l_{[x_i,x_j]}) = \tilde\lambda(\sum_p C^p_{ij} l_{x_p}) = \sum_{p,m} C_{ij}^p \bar{\mathcal G}_{p}^m \sharp \tilde{x}_m.$
	Equality $\tilde\lambda(l_{x_i} \otimes l_{x_j} - l_{x_j} \otimes l_{x_i})  = \tilde\lambda(l_{[x_i,x_j]})$ is now proven.	
	Therefore, by quotienting the domain of $\tilde \lambda$ by ideal $I^l$, we induce a well defined map $\lambda \colon U(\mathfrak{h}_{Lie}) \to \OO(\Aut(\mathfrak{h})) \sharp U(\mathfrak{h}_{Lie}) $.
	Additionally, we note that clearly
	\begin{equation}\label{eq:antimult} \lambda( v z) = \lambda( z)\lambda( v), \quad \text{ for all } v, z \in U(\mathfrak{h}_{Lie}).
	\end{equation}
To see that $\lambda=\lambda_{\mathbf{b}}$ defined by $\lambda_{\mathbf{b}}(\tilde{x}_j) = \sum_i \bar{\mathcal{G}}_{\mathbf{b}j}^i \sharp \tilde{x}_i$~(\ref{eq:lambda}) does not depend on the basis $\mathbf{b}$, we compute 
$\lambda_{\mathbf{b}'}(\tilde{x'}_j)
= \sum_s\bar\GG^s_{\mathbf{b'}j}\sharp\tilde{x'}_s
= \sum_{s,i}\bar\GG^s_{\mathbf{b'}j}\sharp T^i_s\tilde{x}_i
= \sum_{i,j,r,s} T^i_l {T^{-1}}^l_s\bar\GG^s_{\mathbf{b}'r} T^r_j\sharp\tilde{x}_i
\stackrel{(\ref{eq:G'TGTm})}= \sum_{i,l,m,s} T^i_l \bar\GG^l_{\mathbf{b}j} \sharp \tilde{x}_i 
= \sum_s T^i_j\lambda_{\mathbf{b}}(\tilde{x}_i) = \lambda_{\mathbf{b}}(\sum_s T^i_j\tilde{x}_i) = \lambda_{\mathbf{b}}(\widetilde{x'}_j)$.
	
	(iii) First we check that $\lambda(\tilde x_j)$ and $\tilde x_k$ commute for all $j,k \in \{1,\ldots,n\}$.
        \begin{align*}
	\tilde x_k \cdot \lambda(\tilde x_j) &=  \tilde x_k \cdot \sum_i  \bar{\mathcal G}_j^i \sharp \tilde x_i = \sum_{i,m} \bar\GG^m_j \sharp(\tilde x_k \btl \bar{\mathcal G}_m^i ) \tilde x_i \\
	&=		\sum_{i,m} \bar\GG^m_j \sharp(\delta_m^i \tilde x_k + C_{mk}^i)\tilde x_i = \sum_{m} \bar\GG^m_j \sharp(\tilde x_k \tilde x_m + [\tilde x_m,\tilde x_k]) = \\
	&= 	\sum_{m} \bar\GG^m_j \sharp \tilde x_m \tilde x_k = \lambda(\tilde x_j) \cdot \tilde x_k.
	\end{align*} By using (\ref{eq:antimult}), it is easy to prove the claim inductively for all elements of $U(\mathfrak{h}_{Lie})$,
	\begin{equation}\label{eq:imacom}
	z \cdot \lambda( v) = \lambda( v) \cdot  z, \quad \text{ for all }  v, z \in U(\mathfrak{h}_{Lie}).
	\end{equation}  

	(iv) Coaction axiom $(\Delta\otimes\id)\circ\lambda = (\id\otimes\lambda)\circ\lambda$ on generators $\tilde{x}_j$, $j\in \{1,\ldots,n\}$, is apparent from definitions~(\ref{eq:OAutcopr}) and~(\ref{eq:lambda}). Both sides of it evaluate to $\sum_{k,i} \bar{\mathcal G}_j^k \otimes \bar{\mathcal G}_k^i \sharp \tilde x_i$.
       It is now sufficient to show that, if the coaction axiom is true for $ v, z \in U(\mathfrak{h}_{Lie})$, then it is true for the product $v  z \in U(\mathfrak{h}_{Lie})$. We compute
	\begin{align}\label{eq:coabc} \lambda( zv) &\stackrel{(\ref{eq:antimult})}{=} \lambda(v) \lambda( z) = \sum  v_{[-1]}\sharp  v_{[0]} \cdot  z_{[-1]}\sharp  z_{[0]} \stackrel{(\ref{eq:imacom})}{=} \sum v_{[-1]}  z_{[-1]} \sharp z_{[0]} v_{[0]}, 
	\end{align}
	from which it follows that, because $v$ and $z$ are assumed to satisfy the coaction axiom identity,
	\begin{align*}
	((\id \otimes \lambda)\circ \lambda)(v z) & = \sum v_{[-1]}  z_{[-1]} \otimes \lambda(  z_{[0]} v_{[0]}) \\
	& =  \sum v_{[-1]} z_{[-1]} \otimes v_{[0][-1]} z_{[0][-1]} \otimes z_{[0][0]} v_{[0][0]} \\
	& = \sum v_{[-1](1)} z_{[-1](1)} \otimes v_{[-1](2)}  z_{[-1](2)} \otimes  z_{[0]} v_{[0]} \\
	& = \sum (v_{[-1]}  z_{[-1]})_{(1)} \otimes (v_{[-1]} z_{[-1]})_{(2)}  \otimes  z_{[0]} v_{[0]}
	\end{align*}
	and, on the other hand,  
	\begin{align*}
	((\Delta \otimes \id) \circ \lambda)( z v) & = (\Delta \otimes \id)(\sum v_{[-1]} z_{[-1]} \sharp  z_{[0]} v_{[0]}) \\
	& = \sum (v_{[-1]} z_{[-1]})_{(1)} \otimes (v_{[-1]} z_{[-1]})_{(2)}  \otimes  z_{[0]} v_{[0]}.
	\end{align*}

	Counitality of $\lambda$ is checked first for generators, $((\epsilon \otimes \id) \circ \lambda)(\tilde x_j) = \sum_i\epsilon(\bar\GG^i_j) \tilde x_i = \tilde x_j$ for every $j\in \{1,\ldots,n\}$, and then easily proven inductively by using formula (\ref{eq:coabc}).	

	(v) First, we prove the Yetter--Drinfeld property:
	$$\sum f_{(2)} \cdot \lambda({v}\btl f_{(1)}) = \lambda({v}) \cdot f, \quad \text{ for all } {v}\in U(\mathfrak{h}_{Lie}) \text{ and } f \in \OO(\Aut(\mathfrak{h})).$$
        It is $\genfd$-linear both in ${v}$ and in $f$, hence it is sufficient to show it for ${v}$ and $f$ being words in generators, by induction on the length of a word.
For ${v}=\tilde{x}_k$ and $f=\mathcal{G}^i_j$, 
	\begin{align*} 
	\sum_m {\mathcal G}_j^m \lambda(\tilde x_k \btl {\mathcal G}^i_m) &= \sum_m {\mathcal G}_j^m \lambda(\delta_m^i \tilde x_k + C_{km}^i) = {\mathcal G}_j^i \lambda(\tilde x_k) + \sum_m C_{km}^i {\mathcal G}_j^m,  \\
	\lambda(\tilde x_k) \cdot {\mathcal G}_j^i & = \sum_m \bar{\mathcal G}_k^m \sharp \tilde x_m \cdot {\mathcal G}_j^i =  \sum_{m,p} \bar{\mathcal G}_k^m {\mathcal G}_p^i \sharp (\tilde x_m \btl {\mathcal G}_j^p) = \sum_{m,p} \bar{\mathcal G}_k^m {\mathcal G}_p^i \sharp (\delta_j^p \tilde x_m + C_{mj}^p)  \\ 
	& 
	= \sum_m \bar{\mathcal G}_k^m {\mathcal G}_j^i \sharp \tilde x_m + \sum_{m,p} \bar{\mathcal G}_k^m {\mathcal G}_p^i C_{mj}^p
         \\ &
         =  {\mathcal G}_j^i \lambda(\tilde x_k) + \sum_{m,p} {\mathcal G}_p^i C_{mj}^p \bar{\mathcal G}_k^m
         \stackrel{(\ref{eq:CGbarG})}= {\mathcal G}_j^i \lambda(\tilde x_k) + \sum_m C_{km}^i {\mathcal G}_j^m.
	\end{align*}
	The Yetter--Drinfeld property for generators ${v}=\tilde{x}_k$
        and $f = \bar{\mathcal G}_j^i$ is proven analogously.
		
	If the identity is true for some $ v, z\in U(\mathfrak{h}_{Lie})$ and any $f\in\OO(\Aut(\mathfrak{h}))$ of the form $\GG^i_j$, then it also holds for the product $ v z$ and all generators of $\OO(\Aut(\mathfrak{h}))$, because $\Delta(\GG^i_j)= \sum_m \GG^i_m\otimes \GG^m_j$
        and $\Delta(\bar{G}^i_j) = \sum_m\bar\GG^m_j\otimes\bar\GG^i_m$. Indeed, 
	\begin{align*}
	\sum f_{(2)} \lambda(( v z) \btl f_{(1)}) & = \sum f_{(3)} \lambda(( v \btl f_{(1)})( z \btl f_{(2)})) \stackrel{(\ref{eq:antimult})}= \sum f_{(3)} \lambda( z \btl f_{(2)})\lambda( v \btl f_{(1)}) \\
	& =\sum\lambda( z)\cdot f_{(2)} \lambda(v \btl f_{(1)}) = \lambda(z) \lambda( v) \cdot f \stackrel{(\ref{eq:antimult})}= \lambda( v z) \cdot f.
	\end{align*}
	Therefore, by induction, the identity is true for all $v \in U(\mathfrak{h}_{Lie})$ and $f$ being $\mathcal{G}^i_j$ or $\bar{\mathcal{G}}^i_j$.
        
	If the identity holds for some $f$ and $g$ in $\OO(\Aut(\mathfrak{h}))$ and all $ v\in U(\mathfrak{h}_{Lie})$, then it also holds for the product $fg \in \OO(\Aut(\mathfrak{h}))$ and all $ v\in U(\mathfrak{h}_{Lie})$, by
	\begin{align*}\sum (fg)_{(2)} \lambda( v \btl (fg)_{(1)}) 
	& = \sum \sum f_{(2)} g_{(2)} \lambda( v \btl (f_{(1)}g_{(1)})) 	\\
	&= \sum \sum f_{(2)} g_{(2)} \lambda(( v \btl f_{(1)})\btl g_{(1)}) \\
	& = \sum \sum f_{(2)}  \lambda( v \btl f_{(1)}) g  =\lambda( v) fg.
	\end{align*}
	We conclude inductively that the Yetter--Drinfeld property holds.	
	
	Next, the comodule algebra property is actually proven in (\ref{eq:coabc}), by using (\ref{eq:imacom}).
	
	Finally, let us prove the braided commutativity property: 
	$$ z \btl \lambda( v) =  v z, \quad \text{ for all } v, z\in U(\mathfrak{h}_{Lie}).$$ First we check this on generators. For any two $j,k \in \{1,\ldots,n\}$ we have
	$$\tilde{x}_k \btl \sum_i \bar{\mathcal G}_j^i \sharp \tilde{x}_i = \sum_i (\delta_j^i \tilde{x}_k \tilde{x}_i - C_{kj}^i \tilde{x}_i) = \tilde{x}_k \tilde{x}_j - [\tilde{x}_k,\tilde{x}_j] = \tilde{x}_j \tilde{x}_k.$$ 
	Next, we use induction on the length of the word acted on by $\lambda(\tilde{x}_j)$ on the right, for every $\tilde{x}_j, j\in\{1,\ldots,n \}$. The step of induction is
	$$( v z) \btl \sum_i \bar{\mathcal G}_j^i \sharp\tilde x_i = \sum_{i,m} ( v  \btl  \bar{\mathcal G}_j^m)( z \btl \bar{\mathcal G}_m^i )\tilde x_i = \sum_{i,m} ( v  \btl  \bar{\mathcal G}_j^m)\tilde x_m  z =  \tilde x_j v z,\quad
	\forall v, z \in U(\mathfrak{h}_{Lie}). $$
        At last, the step of induction on the length of the word on the right is
$$ w\btl \lambda( z v) = (w \btl \lambda( v)) \btl \lambda( z) = ( vw) \btl \lambda(z) = ( zv)w, \quad\forall w, z, v \in U(\mathfrak{h}_{Lie}).$$
\end{proof}

\begin{theorem}  \label{prop:Oautr} Let $\mathfrak{h}$ be a right Leibniz $\genfd$-algebra with $\genfd$-basis $\mathbf{b} =(y_1, \ldots, y_n)$ and structure constants $C_{ij}^k$ determined from $[y_i, y_j] = \sum_k C_{ij}^k y_k$, $i,j \in \{1,\ldots,n\}.$ Let $\mathcal{G}^i_j, \bar{\mathcal{G}}^i_j, \ i,j\in\{1,\ldots,n\}$ be the generators of the algebra $\OO(\Aut(\mathfrak{h}))$ from Subsection~\ref{ss:Oaut}. Denote by $\tilde{y}$ the image of $y\in \mathfrak{h}$ in $\mathfrak{h}_{Lie}$.
	
	Then the Hopf pairing $U(\mathfrak{h}_{Lie}) \otimes \OO(\Aut(\mathfrak{h})) \to \genfd$ defined in Proposition~\ref{prop:hpairr} induces a right Hopf action $\btl \colon U(\mathfrak{h}_{Lie}) \otimes \OO(\Aut(\mathfrak{h})) \to U(\mathfrak{h}_{Lie})$ by  formula
	$$\tilde y \btl f := \langle {\tilde y}_{(1)} , f \rangle {\tilde y}_{(2)}, \quad \text{ for } \tilde y \in U(\mathfrak{h}_{Lie}) \text{ and } f \in \OO(\Aut(\mathfrak{h})),$$
	which further induces the structure of a smash product algebra $\OO(\Aut(\mathfrak{h})) \sharp U(\mathfrak{h}_{Lie})$. 
	
	Then there also exists a unique $\genfd$-linear unital antimultiplicative map $\lambda \colon U(\mathfrak{h}_{Lie}) \to \OO(\Aut(\mathfrak{h})) \sharp U(\mathfrak{h}_{Lie})$ such that $$\lambda (\tilde y_j) = \sum_i {\mathcal{G}}_j^i \sharp \tilde y_i, \quad \text{ for }  j \in \{1,\ldots,n\}.$$
        This unique map $\lambda$ is a left coaction. 
	 
	Furthermore, $(U(\mathfrak{h}_{Lie}),\btl,\lambda)$ is a braided commutative right-left Yetter--Drinfeld $\OO(\Aut(\mathfrak{h}))$-module algebra.
        Maps $\blacktriangleleft$ and $\lambda$ do not depend on the choice of basis  $\mathbf{b}$.	
\end{theorem}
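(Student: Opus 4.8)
The plan is to run the proof of Theorem~\ref{prop:Oaut} in its entire logical skeleton, feeding in the right-handed data. The Hopf pairing and the induced right action are supplied by Proposition~\ref{prop:hpairr}, which is already the right-handed mirror of Proposition~\ref{prop:hpair}, and I would present $U(\mathfrak{h}_{Lie})$ as $T(\mathfrak{h}^r)/I^r$ by Lemma~\ref{prop:uhlier}, with $I^r$ generated by $r_{[y_i,y_j]}-r_{y_i}\otimes r_{y_j}+r_{y_j}\otimes r_{y_i}$. Everywhere the left adjoint is replaced by the right adjoint $z\mapsto[z,y]$, which flips signs and interchanges the two lower indices of the structure constants; on generators this makes the action read $\tilde y_k\btl\GG^i_m=\delta^i_m\tilde y_k-C^i_{mk}$ and $\tilde y_k\btl\bar\GG^i_m=\delta^i_m\tilde y_k+C^i_{mk}$, coming from $\langle\tilde y_k,\GG^i_j\rangle=-C^i_{jk}$ and $\langle\tilde y_k,\bar\GG^i_j\rangle=C^i_{jk}$ in Proposition~\ref{prop:hpairr}.

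The action and smash product are immediate, since every Hopf pairing induces a right action by $\tilde y\btl f=\langle\tilde y_{(1)},f\rangle\tilde y_{(2)}$, and their independence of $\mathbf{b}$ is inherited from Proposition~\ref{prop:hpairr}. The one substantive step is the coaction. I would define $\tilde\lambda\colon\mathfrak{h}^r\to\OO(\Aut(\mathfrak{h}))\sharp U(\mathfrak{h}_{Lie})$ on generators, extend it antimultiplicatively to $T(\mathfrak{h}^r)$, and then verify that it annihilates the generators $r_{[y_i,y_j]}-r_{y_i}\otimes r_{y_j}+r_{y_j}\otimes r_{y_i}$ of $I^r$, so that it descends to $\lambda$; uniqueness is automatic, as an antimultiplicative map is fixed by its values on $\tilde y_j$. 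Concretely, I expand $\tilde\lambda(r_{y_i}\otimes r_{y_j}-r_{y_j}\otimes r_{y_i})$ in $\OO(\Aut(\mathfrak{h}))\sharp U(\mathfrak{h}_{Lie})$ via the smash-product multiplication and the generator-level action, collapse the resulting quadratic term in the generators by the appropriate quadratic relation of Proposition~\ref{prop:OAut(L)}, and match it against $\tilde\lambda(r_{[y_i,y_j]})$; exactly as the parallel display in the proof of Theorem~\ref{prop:Oaut}(ii) reduced to the left Leibniz identity, this reduces to the \emph{right} Leibniz identity $[[y_i,y_p],y_j]+[y_i,[y_j,y_p]]=[[y_i,y_j],y_p]$. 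Independence of $\lambda$ from $\mathbf{b}$ then follows by the same base-change argument as in Theorem~\ref{prop:Oaut}(ii), now using~\eqref{eq:G'TGTm}.

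The remaining assertions involve no new ideas: each is a one-line check on generators followed by an induction that uses only the antimultiplicativity of $\lambda$ together with the module- and comodule-algebra structure. Commutativity of $\operatorname{Im}\lambda$ with $1\sharp U(\mathfrak{h}_{Lie})$ is verified on $\tilde y_k\cdot\lambda(\tilde y_j)$ and extended by antimultiplicativity; coassociativity and counitality of $\lambda$ hold on generators by~\eqref{eq:OAutcopr} and propagate to products by the comodule-algebra identity; and the Yetter--Drinfeld property and braided commutativity are each checked first for $v=\tilde y_k$ with $f\in\{\GG^i_j,\bar\GG^i_j\}$ and then extended by induction on word length in both arguments, exactly as in Theorem~\ref{prop:Oaut}(v). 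The generator-level Yetter--Drinfeld check here relies on~\eqref{eq:CGbarG} (or its right-handed counterpart) in place of the role it played in the left-handed proof.

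I expect no conceptual obstacle, since the whole argument is a transcription of the proof of Theorem~\ref{prop:Oaut}. The single delicate point, and the place where the transcription is most error-prone, is the bookkeeping of the change of side: one must fix the precise index orders and sign flips forced by passing from the left adjoint to the right adjoint, and in particular confirm that the chosen coaction generator is mutually compatible with the right-handed normalization of the pairing in Proposition~\ref{prop:hpairr}, so that the quadratic relation of Proposition~\ref{prop:OAut(L)} genuinely closes the identity $\tilde\lambda(r_{[y_i,y_j]})=\tilde\lambda(r_{y_i}\otimes r_{y_j}-r_{y_j}\otimes r_{y_i})$. For this reason I would verify this base case of $\tilde\lambda(I^r)=\{0\}$ in full by hand, including its specialization at the identity automorphism, before trusting the subsequent inductions.
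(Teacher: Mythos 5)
Your strategy --- transcribe Theorem~\ref{prop:Oaut} wholesale, feeding in Proposition~\ref{prop:hpairr} and the presentation $U(\mathfrak{h}_{Lie})\cong T(\mathfrak{h}^r)/I^r$ --- is exactly the paper's approach; its proof of this theorem is the single sentence ``Analogous to the proof of Theorem~\ref{prop:Oaut}.'' But the one verification you defer (your ``single delicate point'': that the coaction generator is compatible with the right-handed pairing, to be checked via $\tilde\lambda(I^r)=\{0\}$) is not merely delicate: it \emph{fails} for the data you propose to feed in, and your own suggested test detects this. With the pairing of Proposition~\ref{prop:hpairr} you correctly get $\tilde y_k\btl\GG^i_j=\delta^i_j\tilde y_k-C^i_{jk}$; combined with the coaction $\lambda(\tilde y_j)=\sum_i\GG^i_j\sharp\tilde y_i$ written in the statement, braided commutativity already dies on generators: $\tilde y_k\btl\lambda(\tilde y_j)=\sum_i(\delta^i_j\tilde y_k-C^i_{jk})\tilde y_i=\tilde y_k\tilde y_j-[\tilde y_j,\tilde y_k]=2\tilde y_k\tilde y_j-\tilde y_j\tilde y_k$, which equals the required $\tilde y_j\tilde y_k$ only if $2[\tilde y_j,\tilde y_k]=0$. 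Likewise, applying $\epsilon\otimes\id$ (your ``specialization at the identity automorphism'') to the required identity $\tilde\lambda(r_{y_i}\otimes r_{y_j}-r_{y_j}\otimes r_{y_i})=\tilde\lambda(r_{[y_i,y_j]})$ yields $-3[\tilde y_i,\tilde y_j]=[\tilde y_i,\tilde y_j]$. So for nonabelian $\mathfrak{h}_{Lie}$ and $\mathrm{char}\,\genfd\neq2$ no unital antimultiplicative map with the values $\sum_i\GG^i_j\sharp\tilde y_i$ exists at all, and no relation from Proposition~\ref{prop:OAut(L)} can close the computation you describe.

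The obstruction is structural. In the smash product it is the \emph{second} Sweedler leg of $f$ that acts, and by~(\ref{eq:OAutcopr}) the second leg of $\GG^i_j$ is $\GG^m_j$, retaining the \emph{lower} index; hence the scalar $-C^m_{jk}$ produced by the action is contracted against the lower index of the multiplying leg $\GG^i_m$, never against $\tilde y_i$, so it can never be reassembled into a bracket $[\tilde y_m,\tilde y_k]$ --- which is precisely the mechanism that makes the left-handed computation close. For $\bar\GG^i_j$ the second leg is $\bar\GG^i_m$, retaining the \emph{upper} index, the scalar contracts with $\tilde y_i$, and everything works. The correct right-handed coaction is therefore $\lambda(\tilde y_j)=\sum_i\bar\GG^i_j\sharp\tilde y_i$ --- the same $\bar\GG$-formula as in Theorem~\ref{prop:Oaut} --- now combined with $\tilde y_k\btl\bar\GG^i_j=\delta^i_j\tilde y_k+C^i_{jk}$: then $\tilde y_k\btl\lambda(\tilde y_j)=\tilde y_k\tilde y_j+[\tilde y_j,\tilde y_k]=\tilde y_j\tilde y_k$, the commutation of $\operatorname{Im}\lambda$ with $1\sharp U(\mathfrak{h}_{Lie})$ holds, and the $I^r$-annihilation closes via~(\ref{eq:CbarGbarG}) exactly as in the left-handed proof. (Flipping the sign of the pairing instead is not an option: vanishing on $I^r$ forces $\langle\tilde y_k,\GG^i_j\rangle=-C^i_{jk}$ through the right Leibniz identity, which is the content of Proposition~\ref{prop:hpairr}.) A secondary inaccuracy in your outline: the $I^l$-annihilation in Theorem~\ref{prop:Oaut}(ii) does not ``reduce to the left Leibniz identity''; it uses only~(\ref{eq:CbarGbarG}) and the Lie bracket of $\mathfrak{h}_{Lie}$, the Leibniz identity entering only through the construction of the pairing. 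Once the coaction is taken with $\bar\GG$, your outline, inductions included, is a correct rendering of the intended ``analogous'' proof.
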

\begin{proof}
Analogous to the proof of Theorem \ref{prop:Oaut}. 	
\end{proof}
\section{Hopf algebroid from Yetter--Drinfeld module $U(\mathfrak{h}_{Lie})$}
\label{sec:Ha}

Given an (associative) algebra $A$ ('base algebra'), a left (associative) $A$-bialgebroid is given by a tuple $(\mathcal{K},\mu,\alpha,\beta,\Delta,\epsilon)$ where $(\mathcal{K},\mu)$ is an algebra ('total algebra'), $\alpha\colon A\to\mathcal{K}$ and $\beta\colon A^{\mathrm{op}}\to\mathcal{K}$ are algebra maps called source and target maps respectively which satisfy $\alpha(a)\beta(b)=\beta(b)\alpha(a)$ for all $a,b\in A$ hence equipping $\mathcal{K}$ with a structure of $A$-bimodule via $a.k.b = \alpha(a)\beta(b)k$ for $a,b\in A, h\in \mathcal{K}$ (and moreover of an $A\otimes A^{\mathrm{op}}$-ring). Comultiplication $\Delta\colon\mathcal{K}\to\mathcal{K}\otimes_A\mathcal{K}$ and counit $\epsilon\colon\mathcal{K}\to A$ are required to be $A$-bimodule maps which make ${}_A\mathcal{K}_A$ into a comonoid in the category of $A$-bimodules. Nontrivial compatibilities of the comonoid structure with $A\otimes A^{\mathrm{op}}$-ring structure on $\mathcal{K}$ are required~\cite{bohmHbk,BrzMilitaru,Lu} which radically simplify if the base algebra $A$ is commutative. A right $A$-bialgebroid is structure $(\mathcal{K},\mu,\alpha,\beta,\Delta,\epsilon)$~\cite{bohmHbk,Lu} such that  $(\mathcal{K},\mu,\beta,\alpha,\Delta^{\mathrm{op}},\epsilon)$ is a left $A^{\mathrm{op}}$-bialgebroid.
A Hopf $A$-algebroid~\cite{bohmHbk,Lu} should be an $A$-bialgebroid with an antihomomorphism of algebras $\tau\colon\mathcal{K}\to\mathcal{K}$ called antipode and with axioms generalizing that of antipode of a Hopf algebra. Commutative Hopf algebroids are a classical subject studied since 1960-s and appear as function algebras on groupoids. Several nonequivalent definitions of Hopf algebroids over a noncommutative base algebra appeared in 1990-s, including Lu--Hopf algebroids~\cite{Lu} which lack symmetries and involve a somewhat ad hoc section map. We consider symmetric Hopf algebroids~\cite{bohmHbk} where $\mathcal{K}$ has a structure of a left $A_L$-bialgebroid $(\mathcal{K},\mu,\alpha_L,\beta_L,\Delta_L,\epsilon_L)$ and a right $A_R$-bialgebroid $(\mathcal{K},\mu,\alpha_R,\beta_R,\Delta_R,\epsilon_R)$ with given isomorphism of algebras $A_L^{\mathrm{op}}\stackrel\cong\to A_R$ and antipode $\tau\colon\mathcal{K}\to\mathcal{K}$ satisfying a list of axioms~\cite{bohmHbk,halg}.

Given any Hopf algebra $H$ with a bijective antipode and a braided commutative Yetter--Drinfeld module algebra $A$ over $H$, smash product $H\sharp A$ is a symmetric Hopf algebroid with $A_L = A^\op$, $A_R = A$ by~\cite{bohmHbk,stojic}, adapting constructions from~\cite{BrzMilitaru,Lu}. This can be applied in the case $H=\OO(\Aut(\mathfrak{h}))$, $A=U(\mathfrak{h}_{Lie})$ from Section~\ref{sec:bcYD}. Hopf algebroid $\OO(\Aut(\mathfrak{h}))\sharp U(\mathfrak{h}_{Lie})$ is, in the case when $\mathfrak{h}$ is a Lie algebra, related to more geometric examples in~\cite{omin} and to the completed Hopf algebroid in~\cite{halg}. 

\subsection{Formulas for the Hopf algebroid $\mathcal{O}(\Aut(\mathfrak{h}))\sharp U(\mathfrak{h}_{Lie})$}
Here we write general formulas for the Hopf algebroid $\OO(\Aut(\mathfrak{h}))\sharp U(\mathfrak{h}_{Lie}) $ for a left Leibniz algebra $\mathfrak{h}$  and specify them also on generators. We use formulas for the right bialgebroid given in Proposition~4.1 of \cite{stojic}, and formulas for the left bialgebroid written in terms of the above smash product and formula for the antipode given in Corollary~4.1 of~\cite{stojic}. For the latter, one uses certain natural antiisomorphism $\phi \colon U(\mathfrak{h}_{Lie})^{\op} \to U(\mathfrak{h}_{Lie})$ or $\theta \colon U(\mathfrak{h}_{Lie}) \to U(\mathfrak{h}_{Lie})^{\op}$, as explained in \cite{stojic}, such that in the resulting Hopf algebroid $\phi = \epsilon_R \circ \alpha_L$ and $\theta = \epsilon_L\circ \alpha_R$. All above formulas are displayed in short in the table below Corollary~4.1 in \cite{stojic}. 
Formulas for a right Leibniz algebra can be derived similarly.

After that, we specify these formulas in the case of the Hopf algebroid corresponding to (in fact, embedded into) the Lie algebra noncommutative phase space of \cite{halg}. 

\subsubsection{For a left Leibniz algebra}
For a left Leibniz algebra $\mathfrak{h}$, in the notation of \cite{stojic} $H = \OO(\Aut(\mathfrak{h}))$, $R= U(\mathfrak{h}_{Lie})$ and antiisomorphism $\phi \colon U(\mathfrak{h}_{Lie})^{\op} \to U(\mathfrak{h}_{Lie})$ maps generators $\tilde x_{1\phi}^{\op}, \ldots, \tilde x_{n\phi}^{\op}$ of $U(\mathfrak{h}_{Lie})^{\op}$ by $\tilde x_{j\phi}^{\op} \mapsto \tilde x_j$, $j\in \{1,\ldots,n\}$, to generators $\tilde x_1,\ldots, \tilde x_n$ of $ U(\mathfrak{h}_{Lie})$. Since $(U(\mathfrak{h}_{Lie}), \btl, \lambda)$ is  a braided commutative right-left Yetter--Drinfeld module algebra over $ \OO(\Aut(\mathfrak{h}))$ by Theorem~\ref{prop:Oaut}, by Corollary~4.1 part (2) in \cite{stojic}, $H\sharp R = \OO(\Aut(\mathfrak{h}))\sharp U(\mathfrak{h}_{Lie})$ is a symmetric Hopf algebroid over $U(\mathfrak{h}_{Lie})^{\op},U(\mathfrak{h}_{Lie})$: it is (i)  a right bialgebroid over $U(\mathfrak{h}_{Lie})$ with structure maps 
\begin{align*}
&\alpha_R \colon U(\mathfrak{h}_{Lie}) \to H \sharp U(\mathfrak{h}_{Lie}), && \alpha_R(\tilde x) = 1_{H} \sharp \tilde x,
\\
&\beta_R \colon U(\mathfrak{h}_{Lie}) \to H\sharp U(\mathfrak{h}_{Lie}), && \beta_R(\tilde x) = \lambda(\tilde x), 
\\
& && \beta_R(\tilde x_j) = \sum_i \bar{\mathcal{G}}^i_j \sharp \tilde x_i, 
\\
&\Delta_R \colon H\sharp U(\mathfrak{h}_{Lie}) \to H\sharp U(\mathfrak{h}_{Lie}) \otimes_{U(\mathfrak{h}_{Lie})} H\sharp U(\mathfrak{h}_{Lie}), && \Delta_R(f\sharp \tilde x) = f_{(1)} \sharp 1_{U(\mathfrak{h}_{Lie})} \otimes_{U(\mathfrak{h}_{Lie})} f_{(2)} \sharp \tilde x ,
\\
&\epsilon_R \colon H\sharp U(\mathfrak{h}_{Lie}) \to  U(\mathfrak{h}_{Lie}), && \epsilon_R(f\sharp \tilde x) = \epsilon(f) \tilde x,
\end{align*} 
(ii) a left bialgebroid over $U(\mathfrak{h}_{Lie})^{\op}$ with structure maps 
\begin{align*}
&\alpha_L \colon U(\mathfrak{h}_{Lie})^{\op} \to H \sharp U(\mathfrak{h}_{Lie}), && \alpha_L(\tilde x^{\op}) = \lambda(\phi(\tilde x^{\op})) = \phi(\tilde x^{\op})_{[-1]} \sharp \phi(\tilde x^{\op})_{[0]},
\\
& && \alpha_L(\tilde x_{j\phi}^{\op}) = \lambda(\tilde x_j) = \sum_i \bar{\mathcal{G}}^i_j \sharp \tilde{x}_i,\\ 
&\beta_L \colon U(\mathfrak{h}_{Lie})^{\op} \to H\sharp U(\mathfrak{h}_{Lie}), && \beta_L(\tilde x^{\op}) = \phi(\tilde x^{\op})_{[0]} \btl S^{-1}(\phi(\tilde x^{\op})_{[-1]}), 
\\ 
& && \beta_L(\tilde x_{j\phi}^{\op}) = \sum_i \tilde x_i \btl \mathcal{G}^i_j = 1_H \sharp \tilde{x}_j + \sum_i C^i_{ij}\sharp 1_{U(\mathfrak{h}_{Lie})},  \\
&\Delta_L \colon H\sharp U(\mathfrak{h}_{Lie}) \to H\sharp U(\mathfrak{h}_{Lie}) \otimes_{U(\mathfrak{h}_{Lie})^{\op}} H\sharp U(\mathfrak{h}_{Lie}), && \Delta_L(f\sharp \tilde x) = f_{(1)} \sharp 1_{U(\mathfrak{h}_{Lie})} \otimes_{U(\mathfrak{h}_{Lie})^{\op}} f_{(2)} \sharp \tilde x, \\
&\epsilon_L \colon H\sharp U(\mathfrak{h}_{Lie}) \to  U(\mathfrak{h}_{Lie})^{\op}, && \epsilon_L(f\sharp \tilde x) = \phi^{-1}(\tilde{x}_{[0]} \btl S^2(\tilde{x}_{[-1]}) Sf),
\\
& && \epsilon_L(f\sharp \tilde x_j) = \phi^{-1}( \sum_i \tilde{x}_i \btl \bar{\mathcal{G}}^i_j Sf) \\
& && \qquad \quad \ \, \, = \epsilon(f)\tilde{x}_{j\phi}^{\op} - \big(\langle \tilde x_j, f \rangle + \epsilon(f) \sum_i  C^i_{ij}\big)1_{U(\mathfrak{h}_{Lie})^{\op}}, 
\\
& && \epsilon_L(1_H\sharp \tilde x_j) =  \tilde{x}_{j\phi}^{\op} - \sum_i C^i_{ij}1_{U(\mathfrak{h}_{Lie})^{\op}}, %, \ \epsilon_L(f \sharp 1_{U(\mathfrak{h}_{Lie})}) = \epsilon(f) 
\end{align*} 
with (iii) antipode 
\begin{align*}
&\mathcal \tau \colon H\sharp U(\mathfrak{h}_{Lie}) \to H\sharp U(\mathfrak{h}_{Lie}), && \tau(f\sharp \tilde x) = \tilde{x}_{[0]} \cdot S^2(\tilde{x}_{[-1]})Sf, 
\\
& && \tau(f \sharp \tilde{x}_j) = \sum_i \tilde{x}_i \cdot \bar{\mathcal{G}}^i_j Sf = \sum_i \bar{\mathcal{G}}^i_j S(f_{(2)})\sharp (\tilde x_i \btl S(f_{(1)})) - \sum_{i} C^i_{ij}  Sf\sharp 1_{U(\mathfrak{h}_{Lie})},
\\
& && \tau(1_H \sharp \tilde{x}_j) 
= \sum_i \bar{\mathcal{G}}^i_j \sharp \tilde x_i - \sum_{i} C^i_{ij}\sharp 1_{U(\mathfrak{h}_{Lie})}, 
\end{align*} 
where in formulas $\tilde x \in U(\mathfrak{h}_{Lie})$, $\tilde x^{\op} \in U(\mathfrak{h}_{Lie})^{\op}$, $f\in H$, and $j\in \{1, \ldots,n\}$ as index of generators $\tilde x_j$ of $U(\mathfrak{h}_{Lie})$. 

Alternatively, formulas for the left bialgebroid can be written in terms of the antiisomorphism $\theta \colon U(\mathfrak{h}_{Lie}) \to U(\mathfrak{h}_{Lie})^{\op}$. Denote by $\tilde x_{1\theta}^{\op}, \ldots, \tilde x_{n\theta}^{\op}$ the generators of $U(\mathfrak{h}_{Lie})^{\op}$ defined by $\tilde x_{j\theta}^{\op} := \theta(\tilde x_j)$, $j \in \{1,\ldots,n\}$. Note that the generators $\tilde{x}_{j\phi}^{\op}$ are generally different from the generators $\tilde{x}_{j\theta}^{\op}$. The formulas for the structure maps of (ii') the left bialgebroid over $U(\mathfrak{h}_{Lie})^{\op}$ written by using $\theta$ are:  
\begin{align*}
&\alpha_L \colon U(\mathfrak{h}_{Lie})^{\op} \to H \sharp U(\mathfrak{h}_{Lie}), && \alpha_L(\tilde x^{\op}) = \theta^{-1}(\tilde x^{\op})_{[0]} \cdot S^2(\theta^{-1}(\tilde x^{\op})_{[-1]}) ,
\\
& && \alpha_L(\tilde x_{j\theta}^{\op}) 
= \sum_i x_i \cdot \bar{\mathcal{G}}^i_j 
= \sum_i \bar{\mathcal{G}}^i_j \sharp \tilde{x}_i  - \sum_{i,k} C^i_{ik} \bar{\mathcal{G}}^k_j \sharp 1_{U(\mathfrak{h}_{Lie})} 
\\
& && \qquad \quad \ \, = \sum_i \bar{\mathcal{G}}^i_j \sharp \tilde{x}_i  - \sum_{i} C^i_{ij} \sharp 1_{U(\mathfrak{h}_{Lie})},\\ 
&\beta_L \colon U(\mathfrak{h}_{Lie})^{\op} \to H\sharp U(\mathfrak{h}_{Lie}), && \beta_L(\tilde x^{\op}) = 1_{H} \sharp \theta^{-1}(\tilde x^{\op}), 
\\ 
& && \beta_L(\tilde x_{j\theta}^{\op}) = 1_H\sharp \tilde x_j, \\ 
&\Delta_L \colon H\sharp U(\mathfrak{h}_{Lie}) \to H\sharp U(\mathfrak{h}_{Lie}) \otimes_{U(\mathfrak{h}_{Lie})^{\op}} H\sharp U(\mathfrak{h}_{Lie}), && \Delta_L(f\sharp \tilde x) = f_{(1)} \sharp 1_{U(\mathfrak{h}_{Lie})} \otimes_{U(\mathfrak{h}_{Lie})^{\op}} f_{(2)} \sharp \tilde x,
\\
&\epsilon_L \colon H\sharp U(\mathfrak{h}_{Lie}) \to  U(\mathfrak{h}_{Lie})^{\op}, && \epsilon_L(f\sharp \tilde x) = \theta(\tilde x \btl S^{-1}(f)), 
\\
& && \epsilon_L(1_H\sharp \tilde x_j) =  \tilde{x}_{j\theta}^{\op} ,
\\
& && \epsilon_L(f\sharp \tilde x_j) = \epsilon(f)\tilde{x}_{j\theta}^{\op} - \langle \tilde x_j, f \rangle 1_{U(\mathfrak{h}_{Lie})^{\op}}. 
\end{align*} 

\subsubsection{Lie algebra type noncommutative phase space}
For convenience, we here write formulas for the structure maps of the Hopf algebroid $\OO(\Aut(\mathfrak{g}^R)) \sharp U(\mathfrak{g}^R)$ over $U(\mathfrak{h}^L), U(\mathfrak{g}^R)$ that is inside an \emph{ad hoc} completed version $\hat S(\mathfrak{g}^*) \sharp U(\mathfrak{g}^R) \cong U(\mathfrak{g}^L)\sharp \hat S(\mathfrak{g}^*)$ of a Hopf algebroid that is the Lie algebra type noncommutative phase space in \cite{halg}.

In the setup of \cite{halg}, $\mathfrak{h} = \mathfrak{g}^R$, $H = \OO(\Aut(\mathfrak{g}^R))$, $R= U(\mathfrak{g}^R)$, $\hat y_1,\ldots,\hat y_n$ are generators of $\mathfrak{g}^R$, $\hat x_1,\ldots,\hat x_n$ are the corresponding generators of $\mathfrak{g}^L$, and $\theta \colon U(\mathfrak{g}^R) \to U(\mathfrak{g}^L)$ is the antiisomorphism $\theta(\hat y_j) = \hat x_j$, $j\in \{1,\ldots,n\}$. We use the formulas above by writing $\hat y$ instead of $\tilde x$, $\hat y_j$ instead of $\tilde x_j$,  $\hat x$ instead of $\tilde x^{\op}$,  $\hat x_j$ instead of $\tilde x_{j\theta}^{\op}$, $\mathcal{O}$ instead of $\bar{\mathcal{G}}$ and $\bar{\mathcal{O}}$ instead of ${\mathcal{G}}$, $-C^i_{jk}$ instead of $C^i_{jk}$, $\hat z_j$ instead of $\tilde x_{j\phi}^\op$, and $\mathcal S$ instead of $\tau$. %We omit writing $1_H$ and $1_{U(\mathfrak{g}^R)}$ in formulas here.

Now $H\sharp R = \OO(\Aut(\mathfrak{g}^R))\sharp U(\mathfrak{g}^R)$ is a Hopf algebroid over $U(\mathfrak{g}^L),U(\mathfrak{g}^R)$: it is (i)  a right bialgebroid over $U(\mathfrak{g}^R)$ with structure maps 
\begin{align*}
&\alpha_R \colon U(\mathfrak{g}^R) \to H \sharp U(\mathfrak{g}^R), && \alpha_R(\hat y) = 1_{H} \sharp \hat y,
\\
&\beta_R \colon U(\mathfrak{g}^R) \to H\sharp U(\mathfrak{g}^R), && \beta_R(\hat y) = \lambda(\hat y) = \hat{y}_{[-1]} \sharp \hat{y}_{[0]}, 
\\
& && \beta_R(\hat y_j) = \sum_i {\mathcal{O}}^i_j \sharp \hat y_i, 
\\
&\Delta_R \colon H\sharp U(\mathfrak{g}^R) \to H\sharp U(\mathfrak{g}^R) \otimes_{U(\mathfrak{g}^R)} H\sharp U(\mathfrak{g}^R), && \Delta_R(f\sharp \hat y) = f_{(1)} \sharp 1_{U(\mathfrak{g}^R)} \otimes_{U(\mathfrak{g}^R)} f_{(2)} \sharp \hat y ,
\\
&\epsilon_R \colon H\sharp U(\mathfrak{g}^R) \to  U(\mathfrak{g}^R), && \epsilon_R(f\sharp \hat y) = \epsilon(f) \hat y,
\end{align*} 
(ii) a left bialgebroid over $U(\mathfrak{h}_{Lie})^{\op}$ with structure maps
\begin{align*}
&\alpha_L \colon U(\mathfrak{g}^L) \to H \sharp U(\mathfrak{g}^R), && \alpha_L(\hat x) = \theta^{-1}(\hat x)_{[0]} \cdot S^2(\theta^{-1}(\hat x)_{[-1]}) ,
\\
& && \alpha_L(\hat x_j) 
= \sum_i \hat y_i \cdot {\mathcal{O}}^i_j 
= \sum_i {\mathcal{O}}^i_j \sharp \hat{y}_i  + \sum_{i} C^i_{ij},
%= \sum_i {\mathcal{O}}^i_j \sharp \hat{y}_i  + \sum_{i} C^i_{ij} \sharp 1_{U(\mathfrak{g}^R)},
\\ 
& && \alpha_L(\hat z_j) = \lambda(\hat y_j) = \sum_i \OO^i_j \sharp \hat y_i, 
\\
&\beta_L \colon U(\mathfrak{g}^L) \to H\sharp U(\mathfrak{g}^R), && \beta_L(\hat x) = 1_{H} \sharp \theta^{-1}(\hat x), 
\\ 
& && \beta_L(\hat x_j) = \hat y_j, 
\\ 
& && \beta_L(\hat z_j) = \sum_i \hat y_j \btl \bar \OO^i_j = \hat y_j - \sum_i C^i_{ij}, 
\end{align*}
\begin{align*}
\null &\Delta_L \colon H\sharp U(\mathfrak{g}^R) \to H\sharp U(\mathfrak{g}^R) \otimes_{U(\mathfrak{g}^L)} H\sharp U(\mathfrak{g}^R), && \Delta_L(f\sharp \hat y) = f_{(1)} \sharp 1_{U(\mathfrak{g}^R)} \otimes_{U(\mathfrak{g}^L)} f_{(2)} \sharp \hat y,
\\
&\epsilon_L \colon H\sharp U(\mathfrak{g}^R) \to  U(\mathfrak{g}^L), && \epsilon_L(f\sharp \hat y) = \theta(\hat y \btl S^{-1}(f)),
\\
& && \epsilon_L(1_H\sharp \hat y_j) =  \hat{x}_j,
\\
& && \epsilon_L(f\sharp \hat y_j) = \epsilon(f)\hat{x}_j - \langle \hat y_j, f \rangle ,%= \epsilon(f)\hat{x}_j - \langle \hat y_j, f \rangle 1_{U(\mathfrak{g}^L)} ,
\end{align*} 
with (iii) antipode 
\begin{align*}
&\mathcal S \colon H\sharp U(\mathfrak{g}^R) \to H\sharp U(\mathfrak{g}^R), && \mathcal S (f\sharp \hat y) = \hat{y}_{[0]} \cdot S^2(\hat{y}_{[-1]})Sf, 
\\
& && \mathcal S (f \sharp \hat{y}_j) = \sum_i \hat{y}_i \cdot {\mathcal{O}}^i_j Sf = \sum_i {\mathcal{O}}^i_j S(f_{(2)})\sharp (\hat y_i \btl S(f_{(1)})) + \sum_{i} C^i_{ij}  Sf\sharp 1_{U(\mathfrak{g}^R)}, 
\\
& &&  \mathcal S (1_H \sharp \hat{y}_j) 
= \sum_i \hat y_i \cdot \OO^i_j = \alpha_L(\hat x_j) = \sum_i {\mathcal{O}}^i_j \sharp \hat y_i + \sum_{i} C^i_{ij} = \alpha_L(\hat z_j) + \sum_{i} C^i_{ij}, 
\\
& && \mathcal S (\alpha_L(\hat x_j)) = \tau(\sum_i \hat y_i \cdot \OO^i_j) =  \sum_{k,i} \bar{\OO}^i_j\OO^k_i \sharp \hat y_k + \sum_{k,i} \bar{\OO}^i_j C^k_{ki} =\hat y_j + \sum_i C^i_{ij}
\\
& && \mathcal S (\alpha_L(\hat{z}_j)) = \tau(\sum_i \OO^i_j \sharp \hat y_i) =  \sum_{i,k} \hat y_k \cdot \OO^k_i \cdot \bar{\OO}^i_j = \hat y_j, 
\end{align*} 
where in formulas $\hat y \in U(\mathfrak{g}^R)$, $\hat x \in U(\mathfrak{g}^L)$, $f\in H$, and $j\in \{1, \ldots,n\}$. 

\begin{remark} The second formula in (52) and the first formula in (53) in \cite{halg} have a mistake in sign: the minus sign should be replaced by a plus sign, and vice versa. The correct formulas are 
	\begin{equation} \mathcal{S}^2 (\hat y_\mu) = \mathcal S(\hat x_\mu ) = \hat y_\mu - C^\lambda_{\mu\lambda}, \quad \mathcal{S}^{-2}(\hat x_\mu) = \mathcal{S}^{-1}(\hat y_\mu) = \hat x_\mu + C^\lambda_{\mu\lambda}, 
	\end{equation}
	\begin{equation} \mathcal{S}^2 (\hat x_\mu) = \hat x_\mu - C^\lambda_{\mu\lambda}, \quad \mathcal{S}^{-2}(\hat y_\mu)  = \hat y_\mu + C^\lambda_{\mu\lambda},
\end{equation}
since the antipode $\mathcal S$ maps 
$\hat z_\mu = \hat x_\mu + C^\lambda_{\mu\lambda} \mapsto \hat y_\mu \mapsto x_\mu.$ Within this remark, summation over repeated indices is understood.
\end{remark}

\end{document}